\documentclass[12pt]{amsart}
\usepackage{amssymb,latexsym}
\usepackage{enumerate}

\makeatletter
\@namedef{subjclassname@2010}{%
  \textup{2010} Mathematics Subject Classification}
\makeatother

\newtheorem{thm}{Theorem}[section]
\newtheorem{cor}[thm]{Corollary}
\newtheorem{lemma}[thm]{Lemma}





\theoremstyle{definition}
\newtheorem{defin}[thm]{Definition}
\newtheorem{rem}[thm]{Remark}



\numberwithin{equation}{section}










\def\C{\mathbb{C}}
\def\R{\mathbb{R}}
\def\Z{\mathbb{Z}}
\def\N{\mathbb{N}}

\def\H{{\mathcal{H}}}
\def\s{\textbf{{s}}}
\def\t{\textbf{{t}}}



\begin{document}

\baselineskip=17pt

\title[Atomic decomposition and complex interpolation]{Atomic decomposition and interpolation via the complex method for mixed norm Bergman spaces on tube domains over symmetric cones}
\author{DAVID BEKOLLE, JOCELYN GONESSA AND CYRILLE NANA}
\address{University of Ngaound\'er\'e, Faculty of Science, Department of Mathematics, P. O. Box 454, Ngaound\'er\'e, Cameroon}
\email{dbekolle@univ-ndere.cm}
\address{Universit\'e de Bangui, Facult\'e des Sciences, D\'epartement de Math\'ema-tiques et Informatique, BP. 908, Bangui, R\'epublique Centrafricaine}
\email{gonessa.jocelyn@gmail.com}
\date{}
\address{Faculty of Science, Department of Mathematics, University of Buea, P.O. Box 63, Buea, Cameroon}
\email{{\tt nana.cyrille@ubuea.cm}}

\maketitle
  
\begin{abstract}
Starting from an adapted Whitney decomposition of tube domains in $\C^n$ over irreducible symmetric cones of $\R^n,$ we prove an atomic decomposition theorem in mixed norm weighted Bergman spaces on these domains. We also characterize the  interpolation space via the complex method between two mixed norm weighted Bergman spaces.
\end{abstract}

\section{Introduction}

The context and the notations are those of \cite{FK}.
Let $\Omega$ be an irreducible symmetric cone of rank r in a vector
space $V$ of dimension n, endowed with an inner product $(.|.)$
for which $\Omega$ is self-dual.


We recall that $\Omega$ induces in $V$ a structure of Euclidean
Jordan algebra with identity $\mathbf e$ such that $$\overline
\Omega=\{x^2: x\in V\}.$$  Let
$\{c_1,\cdots,c_r\}$ be a fixed Jordan frame in $V$ and
$$V=\underset{1\le i\le j\le r } {\oplus} V_{i,j}$$ be its associated Peirce
decomposition of $V$. We denote by
$$\Delta_1(x),\cdots,\Delta_r(x)$$ the principal minors of $x\in V$
with respect to the fixed Jordan frame $\{c_1,\cdots,c_r\}$. More
precisely, $\Delta_k(x), \hskip 2truemm k=1,\cdots,r$ is the determinant of the projection
$P_kx$ of $x$, in the Jordan subalgebra $V^{(k)}=\oplus_{1\le i\le
j\le k }V_{i,j}$. We have  $\Delta_k(x)>0$,
$k=1,\cdots,r$ when $x\in \Omega,$ and the determinant $\Delta$ of the Jordan algebra  is given by $\Delta=\Delta_r.$  The generalized power function
on $\Omega$ is defined as
$$\Delta_{\bf s} (x)=\Delta_1^{s_1-s_2}(x)\Delta_2^{s_2-s_3}(x)\cdots\Delta_r^{s_r}(x), \quad x\in \Omega, \hskip 2truemm {\s=(s_1,\cdots,s_r)}\in\C^r.$$
We adopt the following standard notations:
$$ n_k = 2(k-1)\frac{\frac nr -1}{r-1} \quad {\rm and}\quad m_k = 2(r-k)\frac{\frac nr -1}{r-1}.$$ 
For $\s=(s_1,\cdots,s_r) \in \mathbb R^r$ and $\rho$ real, the notation $\s+\rho$ will stand for the vector whose coordinates are $s_k + \rho, \hskip 2truemm k=1,\cdots,r.$
For $1\leq p\leq\infty$ and $1\leq q <\infty,$ let
$L^{p,\,\,q}_\textbf s$ 
denote the mixed norm Lebesgue space constisting of measurable functions $F$ on $T_\Omega$ such that
$$\|F\|_{L^{p,\,\,q}_{\s}}=\left(\int_\Omega\|F(\cdot+iy)\|^q_p\Delta_{\s-\frac{n}{r}}(y)dy\right)^\frac{1}{q}<\infty$$
where  
$$\|F(\cdot+iy)\|_p=\left(\int_{V}|F(x+iy)|^pdx\right)^\frac{1}{p}$$
(with the obvious modification if $p=\infty$). The \textit{mixed norm  weighted Bergman space} $A_\s^{p,q}$ is the (closed) subspace of $L^{p,\,\,q}_\s$ consisting of holomorphic functions. Following \cite{DD}, $A_\s^{p,q}$ is non-trivial if only if $\s_k>\frac {n_k}2, \hskip 2truemm k=1,\cdots, r.$ When $p=q$, we write $L^{p,\,\,q}_\s=L^{p}_\s$ and $A^{p,\,\,q}_\s=A^{p}_\s$ which are respectively the usual weighted Lebesgue space and the usual weighted Bergman space. Moreover, when $p=q=2$ the orthogonal projector $P_\s$ from the Hilbert space $L^2_\s$ onto its closed subspace $A^2_\s$ is called \textit{weighted Bergman projector}. It is well known that $P_\s$ is the integral operator on $L_\s^2$ given by the formula 
$$P_\s F(z)=\int_{T_\Omega}B_\s (z, u+iv)F(u+iv)\Delta_{\s-\frac{n}{r}}(v)dudv,$$
where
$$B_\s (z,u+iv)=d_\s \Delta_{-\s-\frac{n}{r}}(\frac{z-u+iv}{i}) $$
is the reproducing kernel on $A^2_\s,$  called \textit{weighted Bergman kernel} of $T_\Omega$. Precisely,
 $\Delta_{-\s-\frac{n}{r}}(\frac{x+iy }{i})$ is the holomorphic determination of the $(-\s-\frac{n}{r})$-power which reduces to the function $\Delta_{-\s-\frac{n}{r}}(y )$ when $x =0$.

Our first result is an atomic decomposition theorem for functions in mixed norm weighted Bergman spaces on tube domains over symmetric cones. It generalizes the result of \cite{BBGNPR} for usual weighted Bergman spaces on tube domains over symmetric cones and the result of \cite{RT} for mixed norm weighted Bergman spaces on the upper half-plane (the case $n=r=1).$ 

\newtheorem*{thA}{\bf Theorem A}
\begin{thA}{\it
Let  $\textbf s$ be a vector of $\R^r$ such that $s_k > \frac {n_k}2, \hskip 1truemm k=1,\cdots,r.$ Assume that $P_\s$ extends to a bounded operator on $L_\textbf s^{p,q}$. Then there is a sequence  of 
points $\{z_{l,j}=x_{l,j}+iy_j\}_{l\in\Z, \hskip 1truemmj\in\N}$ in $T_\Omega$ and a positive constant $C$ such that the following assertions hold.
\begin{enumerate}[\upshape (i)]
\item For every sequence $\{\lambda_{l,j}\}_{l\in\Z,  \hskip 1truemm j\in\N}$ such that $$\sum_j\left(\sum_l|\lambda_{l,j}|^p\right)^\frac{q}{p}\Delta_{\textbf s+\frac{nq}{rp}}(y_j)<\infty,$$ the series $$\sum_{l,j}\lambda_{l,j}\Delta_{\textbf s+\frac{nq}{rp}}(y_j)B_\s(z,z_{l,j})$$ is convergent in $A_\textbf s^{p,q}$. Moreover, its sum $F$ satisfies the inequality
$$\|F\|_{A_\textbf s^{p,q}}^q\leq C \sum_j\left(\sum_l|\lambda_{l,j}|^p\right)^\frac{q}{p}\Delta_{\textbf s+\frac{nq}{rp}}(y_j)$$
\item Every function $F\in A_\textbf s^{p,q}$ may be written as 
$$F(z)=\sum_{l,j}\lambda_{l,j}\Delta_{\textbf s+\frac{nq}{rp}}(y_j)B_\textbf s (z,z_{l,j}),$$ 
with $$\sum_j\left(\sum_l|\lambda_{l,j}|^p\right)^\frac{q}{p}\Delta_{\textbf s+\frac{nq}{rp}}(y_j)\leq C \|F\|_{A_\textbf s^{p,q}}^q$$
\end{enumerate} 
}
\end{thA}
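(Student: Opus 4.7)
The plan is to use the adapted Whitney decomposition of $T_\Omega$ constructed earlier in the paper: a family of cells $\{U_{l,j}\}_{l \in \Z, j \in \N}$ of fixed Bergman radius $\delta > 0$ with centers $z_{l,j} = x_{l,j} + iy_j$, enjoying bounded overlap, covering $T_\Omega$, and on each of which the weight $\Delta_{\s - n/r}(v)$ and the kernel $B_\s(z, \cdot)$ vary by at most a multiplicative constant independent of $(l,j)$. The proofs of both (i) and (ii) reduce to passing between the continuous reproducing formula $F = P_\s F$ and its Riemann-sum discretization on this grid, the central analytic ingredient being the assumed boundedness of $P_\s$ on $L^{p,q}_\s$.

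For part (i), given a sequence $\{\lambda_{l,j}\}$ with the stated summability, I would introduce the step function
$$G(w) = \sum_{l,j} \lambda_{l,j}\, \frac{\Delta_{\s + nq/(rp)}(y_j)}{\int_{U_{l,j}} \Delta_{\s-n/r}(v')\,du'\,dv'}\, \chi_{U_{l,j}}(w).$$
By slow variation of the kernel on each $U_{l,j}$, $P_\s G(z)$ coincides, up to a bounded factor, with the candidate sum $F(z) = \sum_{l,j} \lambda_{l,j} \Delta_{\s + nq/(rp)}(y_j)\, B_\s(z, z_{l,j})$. Convergence in $A^{p,q}_\s$ and the norm bound then follow from $\|F\|_{A^{p,q}_\s} \lesssim \|P_\s\|\, \|G\|_{L^{p,q}_\s}$, together with a direct computation of $\|G\|_{L^{p,q}_\s}$ that uses the bounded overlap of the cells and the volume identity $\int_{U_{l,j}} \Delta_{\s-n/r}(v)\,du\,dv \asymp \Delta_{\s + n/r}(y_j)$.

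For part (ii), I would introduce the sampling operator
$$TF(z) = \sum_{l,j} F(z_{l,j})\, \Delta_{\s + nq/(rp)}(y_j)\, B_\s(z, z_{l,j})$$
and show it is close to the identity on $A^{p,q}_\s$ when $\delta$ is small. Starting from the reproducing formula and splitting the integral over cells gives $(F - TF)(z) = \sum_{l,j} \int_{U_{l,j}} [B_\s(z,w) F(w) - B_\s(z, z_{l,j}) F(z_{l,j})]\, \Delta_{\s-n/r}(v)\,du\,dv$, which is dominated pointwise by $P_\s(H_\delta F)(z)$ for a majorant $H_\delta F$ built from the oscillations of $F$ and of $B_\s(\cdot, w)$ across a slight enlargement of each Whitney cell. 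A quantitative mean-value inequality for holomorphic $F$ on Bergman balls, combined with a uniform $\delta$-Lipschitz control of $B_\s$ in the Bergman metric, yields $\|H_\delta F\|_{L^{p,q}_\s} \leq \varepsilon(\delta)\, \|F\|_{A^{p,q}_\s}$ with $\varepsilon(\delta) \to 0$ as $\delta \to 0$. Boundedness of $P_\s$ on $L^{p,q}_\s$ then gives $\|I - T\|_{A^{p,q}_\s \to A^{p,q}_\s} \leq C\, \varepsilon(\delta) < 1$ for $\delta$ sufficiently small, so $T$ is invertible, and the required coefficients are $\lambda_{l,j} := (T^{-1}F)(z_{l,j})$, the upper bound on the discrete norm following by applying part (i) to this sequence.

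The main obstacle is the operator estimate $\|I - T\| \leq C\, \varepsilon(\delta)$ in the mixed-norm setting. Because the $L^{p,q}_\s$ norm couples an $L^p$-norm in the base direction $V$ with a weighted $L^q$-norm in the cone direction anisotropically, standard $L^p$-oscillation arguments do not apply directly: one has to track the $x$-oscillation and the $y$-oscillation of $F$ on each cell separately and then recombine them through the weight $\Delta_{\s + nq/(rp)}(y_j)$, all while keeping constants uniform in $(l,j)$. The boundedness hypothesis on $P_\s$ is precisely what converts this pointwise-oscillation control into the operator-norm bound that closes the fixed-point argument.
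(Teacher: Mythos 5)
Your proposal takes a genuinely different route from the paper. The paper proves a sampling theorem (Theorem 3.3) relating the $A^{p,q}_{\s}$ norm of a holomorphic $F$ to the discrete norm $\bigl(\sum_j(\sum_l|F(z_{l,j})|^p)^{q/p}\Delta_{\s+nq/(rp)}(y_j)\bigr)^{1/q}$, and then obtains Theorem A purely by soft duality: since $P_\s$ bounded on $L^{p,q}_\s$ gives the dual identification $(A^{p',q'}_\s)'\cong A^{p,q}_\s$, the sampling operator $R\colon A^{p',q'}_\s\to\ell^{p',q'}_\s$, $RF=\{F(z_{l,j})\}$, is bounded by the first half of the sampling theorem, its adjoint $R^*\colon\ell^{p,q}_\s\to A^{p,q}_\s$ is exactly the atomic series of assertion (i), and for small $\delta$ the second half of the sampling theorem shows $R$ is bounded below, whence $R^*$ is onto, giving (ii). Crucially, the sampling theorem involves no kernel estimates at all, and the adjoint computation is exact (via the reproducing property), so the Bergman kernel $B_\s$ never needs to be approximated or majorized. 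You instead propose the Coifman--Rochberg perturbation scheme: for (i), discretize a step function $G$ and compare $P_\s G$ with the atomic sum; for (ii), build a sampling operator $T$ and show $\|I-T\|<1$ by oscillation estimates, inverting by Neumann series.

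There is a genuine gap in this route that you should confront. Both of your key comparisons --- relating the atomic sum to $P_\s G$ in (i), and controlling the kernel-oscillation piece of $I-T$ in (ii) --- involve the absolute value $|B_\s(z,w)|$ rather than $B_\s(z,w)$ itself. When you split
$$B_\s(z,w)F(w)-B_\s(z,z_{l,j})F(z_{l,j})=B_\s(z,w)\bigl[F(w)-F(z_{l,j})\bigr]+\bigl[B_\s(z,w)-B_\s(z,z_{l,j})\bigr]F(z_{l,j}),$$
the first term is $P_\s$ applied to a small function and is controlled by $\|P_\s\|$, but the second term requires you to integrate $|B_\s(z,w)-B_\s(z,z_{l,j})|\lesssim\varepsilon(\delta)|B_\s(z,w)|$ against the step function of sampled values, i.e.\ you need the positive Bergman operator $P^+_\s$ (with kernel $|B_\s|$) to be bounded on $L^{p,q}_\s$. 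Likewise in (i) the claim that ``$P_\s G(z)$ coincides, up to a bounded factor, with the candidate sum'' is not literally correct --- the discrepancy is an additive error whose control again requires $P^+_\s$. The theorem, however, assumes only that $P_\s$ is bounded, and in tube domains over symmetric cones $P_\s$ can be bounded on ranges of $(p,q)$ where $P^+_\s$ is not (this distinction is precisely why the decoupling-based results of Bourgain--Demeter were needed for the Bergman projection problem). The duality argument in the paper sidesteps this entirely, which is why it is the ``right'' proof under the stated hypothesis. If you want to pursue your approach, you would either have to strengthen the hypothesis to $P^+_\s$ bounded, or find a way to estimate the second term without taking absolute values inside the kernel --- for instance by writing it as $P_\s$ applied to a signed step function and exploiting cancellation, which is delicate in the mixed-norm setting.
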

 
Our second result is an interpolation theorem between mixed norm weighted Bergman spaces. It generalizes the result of \cite{BGN1} for usual weighted Bergman spaces. We adopt the following notation.
$$q_{\s}=\min_{1\leq k\leq r}\left(1+\frac{s_k-\frac{n_k}{2}}{\frac {m_k}2}\right).$$

\newtheorem*{thB}{\bf Theorem B} 
\begin{thB}{\it
\begin{enumerate}[\upshape (1)]
\item 
Let $\s_0, \s_1\in \mathbb R^r$ be such that $(s_0)_k, (s_1)_k > \frac nr -1,\quad k=1,\cdots,r$ and let $1\leq p_0, p_1\leq \infty$, $1\leq q_0, \hskip 1truemm q_1<\infty$ be such that
$1\leq q_i < q_{{\s}_i}$ for every $i=0, 1.$
Then
for every $\theta \in (0, 1),$ we have
$$[A_{{\s}_0}^{p_0,q_0},\,\,A_{{\s}_1}^{p_1,q_1}]_\theta=A_{\s}^{p,q}$$ 
with equivalent norms, where $\frac{1}{p}=\frac{1-\theta}{p_0}+\frac{\theta}{p_1}, \frac{1}{q}=\frac{1-\theta}{q_0}+\frac{\theta}{q_1}$ and $\frac{\s}{q}=\frac{(1-\theta){\s}_0}{q_0}+\frac{\theta {\s}_1}{q_1}$. 
\item 
Let $\s \in \mathbb R^r$ be such that $s_k > \frac {n_k}2, \hskip 2truemm k=1,\cdots, r.$ 
Assume that $P_{\s}$ extends to a bounded operator on $L^{p_i, q_i}_{\s}, \hskip 2truemm i=0, 1$ for $1\leq p_0, p_1 \leq \infty$ and $1 <q_0, q_1 < \infty.$ 
Then
for every $\theta \in (0, 1),$ we have
$$[A_{\s}^{p_0,q_0},\,\,A_{\s}^{p_1,q_1}]_\theta=A_{\s}^{p,q}$$ 
with equivalent norms, where $\frac{1}{p}=\frac{1-\theta}{p_0}+\frac{\theta}{p_1}$ and $\frac{1}{q}=\frac{1-\theta}{q_0}+\frac{\theta}{q_1}.$
\item Let ${\s} \in \mathbb R^r$ be such that $(s)_k > \frac nr -1, \hskip 2truemm k=1,\cdots, r,$  let $1\leq p_0 < p_1\leq\infty$ and let $q_0, q_1$ be such that $1\leq q_0 < q_{\s}\leq q_1.$ 
We assume that $P_{\s}$  extends to a bounded operator on $L^{p_1, q_1}_{\s}.$
Then for some values of $\theta\in (0,1)$, we have
$$[A_{\s}^{p_0,q_0},\,\,A_{\s}^{p_1,q_1}]_\theta=A_{\s}^{p,q}$$ with equivalent norms, where $\frac{1}{p}=\frac{1-\theta}{p_0}+\frac{\theta}{p_1}$ and $\frac{1}{q}=\frac{1-\theta}{q_0}+\frac{\theta}{q_1}.$  
\end{enumerate}
}
\end{thB}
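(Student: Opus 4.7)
The overall strategy is to deduce each interpolation identity for Bergman spaces from the corresponding one for the ambient mixed norm Lebesgue spaces $L^{p,q}_\s$ by a retract argument. Two retract structures are available: when $P_\s$ is bounded at both endpoints, the Bergman projector itself exhibits $A^{p,q}_\s$ as a complemented subspace of $L^{p,q}_\s$; when it is not, Theorem~A provides a discrete retract of $A^{p,q}_\s$ onto a weighted sequence space indexed by the Whitney lattice $\{z_{l,j}\}$.

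The first step, shared by all three parts, is the Stein--Weiss type identity
$$[L^{p_0,q_0}_{\s_0}, L^{p_1,q_1}_{\s_1}]_\theta = L^{p,q}_\s,$$
which follows from the classical complex interpolation of vector valued weighted $L^q$ spaces upon writing $L^{p,q}_\s = L^q\bigl(\Omega, \Delta_{\s-n/r}(y)\,dy; L^p(V)\bigr)$ and absorbing the $y$-weights by a measure-theoretic change of variable. For part $(1)$, the hypothesis $q_i < q_{\s_i}$ is known to imply that $P_{\s_i}$ is bounded on $L^{p_i,q_i}_{\s_i}$. Parametrising $\s(\zeta) = (1-\zeta)\s_0 + \zeta\s_1$ analytically along the strip and viewing $\{P_{\s(\zeta)}\}$ as an analytic family of operators in the sense of Stein, one obtains boundedness of $P_\s$ on $L^{p,q}_\s$, and the standard retract theorem then yields the desired identity for Bergman spaces. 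Part $(2)$ is the specialisation $\s_0 = \s_1 = \s$: the analytic family collapses to the single operator $P_\s$, whose boundedness at both endpoints is assumed directly, and the argument is otherwise identical.

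Part $(3)$ is where the main difficulty lies, because $P_\s$ is not assumed bounded at the $(p_0,q_0)$ endpoint. Here I would invoke Theorem~A to realize each space $A^{p_i,q_i}_\s$ as a retract of the discrete mixed norm sequence space
$$\ell^{q_i}\Bigl(\ell^{p_i};\, \Delta_{\s + n q_i/(r p_i)}(y_j)\Bigr),$$
via the common synthesis map $S:\{\lambda_{l,j}\}\mapsto \sum_{l,j}\lambda_{l,j}\Delta_{\s+nq/(rp)}(y_j) B_\s(\cdot, z_{l,j})$ and a coefficient map $R$ produced in Theorem~A$(ii)$ satisfying $S\circ R = \mathrm{id}_{A^{p_i,q_i}_\s}$. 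The hypothesis $q_0 < q_{\s}$, together with the assumed boundedness of $P_\s$ at the $(p_1,q_1)$ endpoint, ensures that both $S$ and $R$ are bounded at both endpoints. The retract theorem then reduces the problem to interpolating the weighted sequence spaces above, which is itself a Stein--Weiss identity at the discrete level. The qualification ``for some values of $\theta$'' reflects the restriction that the interpolated index $q$ must remain in the range where the synthesis and coefficient maps of Theorem~A are simultaneously bounded.

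The main obstacle is therefore part $(3)$: producing a single coefficient operator $R$ that acts consistently on the two endpoint Bergman spaces and whose norm is controlled at each endpoint by the atomic inequalities of Theorem~A. A natural candidate is the operator implicitly defined in the proof of Theorem~A, taken to be independent of $(p,q)$, with the admissible range of $\theta$ determined by the pair of estimates. Once this compatibility is established, the discrete Stein--Weiss interpolation concludes the argument in each part.
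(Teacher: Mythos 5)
Your plan for part (2) is correct and matches the paper's argument: a single retract via the projector $P_{\s}$, which is bounded at both endpoints by hypothesis and fixes both endpoint Bergman spaces. But parts (1) and (3) contain genuine gaps.

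For part (1), the pivotal claim that ``$q_i < q_{\s_i}$ is known to imply that $P_{\s_i}$ is bounded on $L^{p_i,q_i}_{\s_i}$'' is false. The boundedness theorem (Theorem~1.1) imposes the two-sided constraint $\frac{1}{q_{\s}(p)} < \frac{1}{q} < 1 - \frac{1}{q_{\s}(p)}$, and the hypothesis of part~(1) allows $q_i = 1$, which always fails the upper constraint $\frac1q < 1 - \frac{1}{q_{\s}(p)}$ (since $q_{\s}(p) < \infty$ whenever $n > r$). So there is in general no common projector $P_{\s_0}$ or $P_{\s_1}$ bounded at both endpoints, and the analytic family $\{P_{\s(\zeta)}\}$ you propose is not bounded along the boundary line where $q_i = 1$. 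The paper avoids this by picking a third parameter $\t$ with $t_k$ large and using the single fixed operator $P_{\t}$: Theorem~4.5 shows $P_{\t}$ is bounded from $L^{p_i,q_i}_{\s_i}$ onto $A^{p_i,q_i}_{\s_i}$ even for $q_i = 1$ (under the condition $t_k - (s_i)_k > \frac{m_k}{2}$), and Corollary~4.4 gives $P_{\t} = \mathrm{id}$ on those Bergman spaces. The retract map is then $f \mapsto P_{\t}\circ f$, with $\t$ independent of $\zeta$, so no Stein-type analytic-family machinery is needed.

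For part (3), the difficulty is that $P_{\s}$ is not assumed bounded on $L^{p_0,q_0}_{\s}$, and your proposal to repair this via Theorem~A cannot work because Theorem~A \emph{itself} requires $P_{\s}$ to be bounded on the relevant $L^{p,q}_{\s}$. Thus invoking Theorem~A at the $(p_0,q_0)$ endpoint is circular. The paper instead proceeds by Wolff reiteration (Theorem~4.2): it introduces intermediate spaces $A^{p_2,2}_{\s}$ and $A^{p_3,q_3}_{\s}$ with $1 < q_3 < q_{\s}$ and $2 < q_{\s}$, proves $[A^{p_0,q_0}_{\s}, A^{p_3,q_3}_{\s}]_{\theta} = A^{p_2,2}_{\s}$ from part~(1) (both exponents below $q_{\s}$, hence the $P_{\t}$ trick applies), proves $[A^{p_2,2}_{\s}, A^{p_1,q_1}_{\s}]_{\varphi} = A^{p_3,q_3}_{\s}$ from part~(2) ($P_{\s}$ is bounded at both of those endpoints), and then concludes by Wolff's theorem. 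Without Wolff reiteration, some replacement for the missing endpoint boundedness is needed, and neither the projector retract nor the atomic-decomposition retract supplies it.
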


We recall sufficient conditions on $p, q$ and $\s$ under which the weighted Bergman projector $P_{\s}$ extends to a bounded operator on $L^{p, q}_{\s}.$ We adopt the following notations.
$$p_{\s} =1+\min \limits_k \frac {s_k +\frac nr}{(m_k - s_k)_+}$$
and
$$q_\s(p)=\min\{p,p'\}q_\s.$$

\begin{thm}[\cite{DD1} and \cite{NT}]
The weighted Bergman projector $P_{\s}$ extends to a bounded operator on $L^{p, q}_{\s}$ whenever $\frac 1{q_{\s} (p)}<\frac 1{q}<1-\frac 1{q_{\s} (p)}$ in the following two cases:
\begin{enumerate}
\item[\rm (i)]
$s_j > \frac {n_j}2, \hskip 2truemm j=1,\cdots r$ and $1\leq p < p_{\s}$ \cite{DD1}; 
\item[\rm (ii)]
$s_j > \frac nr -1, \hskip 2truemm j=1,\cdots r$ and $1\leq p \leq \infty$ \cite{NT}.
\end{enumerate}
\end{thm}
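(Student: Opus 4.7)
The plan is to combine a horizontal application of Young's convolution inequality with a vertical Schur test, exploiting the translation-invariance of the kernel along $V$ and the generalized-power-function structure on $\Omega$.

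First I would expose the convolution structure in the horizontal variable. For $z=x+iy$ and $w=u+iv$ one has $B_\s(z,w)=d_\s\Delta_{-\s-\frac{n}{r}}(\frac{(x-u)+i(y+v)}{i})$, so, setting $\Phi_t(x):=d_\s\Delta_{-\s-\frac{n}{r}}(\frac{x+it}{i})$ for $t\in\Omega$,
$$P_\s F(x+iy)=\int_\Omega\bigl(\Phi_{y+v}\ast_x F(\cdot+iv)\bigr)(x)\,\Delta_{\s-\frac{n}{r}}(v)\,dv.$$
Minkowski's integral inequality followed by Young's convolution inequality yield
$$\|P_\s F(\cdot+iy)\|_p\le\int_\Omega\|\Phi_{y+v}\|_1\,\|F(\cdot+iv)\|_p\,\Delta_{\s-\frac{n}{r}}(v)\,dv,$$
and the identity $\|\Phi_t\|_1=C\Delta_{-\s}(t)$, valid as soon as $s_j>\frac{n_j}{2}$, reduces the horizontal step to a clean power-of-$\Delta$ estimate.

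The remaining task is to bound on $L^q(\Omega,\Delta_{\s-\frac{n}{r}}(y)\,dy)$ the positive integral operator
$$Tg(y)=\int_\Omega\Delta_{-\s}(y+v)\,g(v)\,\Delta_{\s-\frac{n}{r}}(v)\,dv.$$
I would apply Schur's test with test functions of the form $\Delta_b$ for a suitable $b\in\R^r$. The two pointwise inequalities required by the test reduce to instances of the Gindikin-type integral $\int_\Omega\Delta_{-\s}(y+v)\Delta_\gamma(v)\,dv=C\Delta_{\gamma-\s+\frac{n}{r}}(y)$, whose convergence thresholds---one at the origin of $\Omega$ and one at infinity---translate, after the elimination of the auxiliary parameter $b$, exactly into the admissibility band $\frac{1}{q_\s(p)}<\frac{1}{q}<1-\frac{1}{q_\s(p)}$. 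Sharpening the horizontal estimate by replacing $\|\Phi_{y+v}\|_1$ by $\|\Phi_{y+v}\|_\rho$ for $\rho\in[1,p']$ refines the constraint on $p$ through the integrability threshold of $\int_V|\Delta_{-\s-\frac{n}{r}}|^{\rho}$ and produces $p<p_\s$; together with $s_j>\frac{n_j}{2}$ this is case (i).

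Case (ii), with the weaker hypothesis $s_j>\frac{n}{r}-1$ and $p$ free to range over $[1,\infty]$, is the main obstacle: the explicit $L^1$-identity for $\Phi_t$ is no longer available. Here I would discretize the projector against the adapted Whitney sampling of $T_\Omega$ used to prove Theorem A, so that $P_\s F$ is comparable to a sum over the lattice points $z_{l,j}$ and $\|P_\s F\|_{L^{p,q}_\s}$ reduces to an $\ell^p(\ell^q)$-bound for a matrix whose entries are controlled by $\Delta_{-\s}$ on pairs of sampling points. Alternatively, the endpoints $p=1$ and $p=\infty$ can be treated by duality against a weighted Bloch-type space, and intermediate $p$ then recovered by Riesz-Thorin interpolation. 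The delicate point in either route is the sharp comparison of $B_\s$ with the Bergman-metric geometry of the cone, which is what makes the discrete estimate converge precisely in the stated range of $q$.
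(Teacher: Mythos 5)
The paper does not reproduce a proof of this statement; it is recalled from the cited references, so I can only assess your proposal on its own merits.

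Your horizontal Young plus vertical Schur scheme is the standard route for the \emph{positive} Bergman operator $P_\s^+$, and the opening steps are correct: the identity $\|\Phi_t\|_1=C\Delta_{-\s}(t)$ holds for $s_j>\frac{n_j}{2}$, and Minkowski followed by Young reduces matters to the one-variable positive operator $T$ on $L^q(\Omega,\Delta_{\s-\frac nr}(y)\,dy)$. But this route produces a band for $q$ that is independent of $p$, namely $\frac{1}{q_\s}<\frac 1q<1-\frac{1}{q_\s}$, and it cannot yield the larger band $\frac{1}{q_\s(p)}<\frac 1q<1-\frac{1}{q_\s(p)}$ with $q_\s(p)=\min\{p,p'\}q_\s$; the two bands coincide only at $p\in\{1,\infty\}$. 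For $1<p<\infty$, in particular $p=2$ where $q_\s(2)=2q_\s$, one must exploit cancellation in the Bergman kernel rather than take absolute values, and no positive-kernel argument can do that. Your proposed fix, replacing $\|\Phi_{y+v}\|_1$ by $\|\Phi_{y+v}\|_\rho$ with $\rho\in[1,p']$, does not accomplish this: Young's inequality with $\rho>1$ sends $L^p$ into $L^r$ with $\frac 1r=\frac 1\rho+\frac 1p-1\neq\frac 1p$, so the horizontal slices leave $L^p$ and the vertical Schur step is no longer in the intended norm. The constraint $p<p_\s$ in case (i) is tied to the reproducing kernel belonging to the dual space $L^{p'}_\s$ (so that the projection is defined on $L^{p,q}_\s$), not to a free parameter in Young's inequality.

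The ingredient your proposal is missing is the $p=2$ estimate via Plancherel and the Fourier--Laplace transform. On a fixed horizontal slice one has $\|\Phi_t*f\|_2=\|\widehat{\Phi_t}\,\widehat f\|_2$, and $\widehat{\Phi_t}$ is supported in $\overline\Omega$ with an explicit exponential-power form by the Paley--Wiener theorem for $T_\Omega$. This produces a vertical operator substantially sharper than your $T$ and yields the band $\frac{1}{2q_\s}<\frac 1q<1-\frac{1}{2q_\s}$ at $p=2$. Interpolating, with the appropriate change of weight in $y$, between this $p=2$ estimate and the positive-operator estimate at $p\in\{1,\infty\}$ is precisely what produces the factor $\min\{p,p'\}$, i.e.\ $q_\s(p)$; this is the strategy in the cited references. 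Your final paragraph gestures at discretization, a Bloch-type duality, and Riesz--Thorin, but without the Plancherel/Fourier--Laplace input the $p=2$ endpoint, which drives the entire $p$-dependence, is absent, and a sampling argument would still need a source of cancellation to beat the positive operator. As written, the proposal establishes a genuine but strictly weaker statement (the $p$-independent band) and does not close the gap to the range claimed by the theorem.
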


We restrict to tube domains over Lorentz cones ($r=2$). For real $\s = (s, s),$ this problem was recently completely solved on tube domains over Lorentz cones (cf. \cite{BGN} for a combination of results from \cite{BD} and \cite{BBGR};  cf. also \cite{BN} for the unweighted case  $\s=(\frac nr,\cdots,\frac nr)$). For vectorial $\s = (s_1, s_2),$ Theorem 1.1 was also extended in \cite{BGN} to other values $p$ and $q.$


The plan of this paper is as follows. In Section 2, we overview some preliminaries and useful results about symmetric cones and tube domains over symmetric cones. In Section 3, we study atomic decomposition of mixed norm Bergman spaces and we prove a more precise statement of Theorem A. In Section 4, we study interpolation via the complex method between mixed norm weighted Bergman spaces and we prove Theorem B. In particular we give a more precise statement of assertion (3) of this theorem (Theorem 4.6) and we ask an open question. A final remark will point out a connection between the two main theorems of the paper (Theorem A and Theorem B).\\
\indent
For $\s=(s,\cdots, s)$ real, Theorem A and Theorem B were presented in the PhD dissertation of the second author \cite{G}. 

\section{Preliminaries}
Materials of this section are essentially from \cite{FK}. We give some definitions and useful results.\\
Let $\Omega$ be an irreducible symmetric cone of rank $r$ in a real vector space $V$ of dimension $n$ endowed with the structure of Euclidean Jordan algebra with identity $\textbf e$. In particular, $\Omega$ is self-dual with respect to the inner product 
$$(x|y)=\textbf{tr}(xy)$$
on $V$.

\subsection{Group action}
Let $G(\Omega)$ be the group of linear transformations of the cone $\Omega$ and $G$ its identity component. By definition, the subgroup $G$ of $G(\Omega)$ is a semi-simple Lie group which acts transitively on $\Omega.$   This gives the identification $\Omega \sim G/K$, where $K:=\{g\in G:\,\,\,\,g\cdot \mathbf e=\mathbf e\}$ is a maximal compact subgroup of $G$. More precisely,
$$K=G\cap O(V),$$
where $O(V)$ is the orthogonal group in $V.$ Furthermore,  there is a solvable subgroup $T$ of $G$ acting simply transitively on $\Omega$. That is, every $y\in\Omega$ can be written uniquely as $y=t\cdot \mathbf e$, for some $t\in T.$ 


Let  $\{c_1,\cdots,c_r\}$ in $\R^n$ be a fixed Jordan frame in $V$ (that is, a complete system of idempotents) and 
$$V=\underset{1\leq i\leq j\leq r}{\oplus}V_{i,j}$$
be its associated Peirce decomposition of $V$
 where 
$$
\left\{
\begin{array}{ll}
V_{i,i}=\R c_i\\
V_{i,j}=\{x\in V:\,\,c_i x=c_j x=\frac{1}{2}x\}\,\,\textrm{if}\,\,i<j.
\end{array}
\right.
$$
We have $\mathbf e = \sum \limits_{1\leq i\leq r} c_i.$
Then the solvable Lie group $T$  factors as the semidirect product $T=NA=AN$ of a nilpotent subgroup $N$ consisting of lower triangular matrices, and an abelian subgroup $A$ consisting of diagonal matrices. The latter takes the explicit form
$$A=\{P(a):\,\,\,\, a=\sum_{i=1}^r a_ic_i,\,\,\,a_i>0\},$$
where $P$ is the quadratic representation of $\R^n$. This also leads to the Iwasawa and Cartan decompositions of the semisimple Lie group $G:$ $$G=NAK \quad {\rm and} \quad G=KAK.$$
Still following \cite{FK}, we shall denote by $\Delta_1(x),\cdots,\Delta_r(x)$ the principal minors of $x\in V$, with respect to the fixed Jordan frame $\{c_1,\cdots,c_r\}$. These are invariant functions under the group $N$,
$$\Delta_k(nx)=\Delta_k(x),$$
where $n\in N$, $x\in V$, $k=1,\cdots,r$, and satisfy a homogeneity relation under $A$,
$$\Delta_k(P(a)x)=a_1^2\cdots a^2_k\Delta_k(x),$$
if $a=a_1c_1+\cdots+a_rc_r$. 

The determinant function $\Delta(y)=\Delta_r(y)$ is also invariant under $K$, and moreover, satisfies the formula
\begin{equation}\label{Delta}
\Delta(gy)=\Delta(ge)\Delta(y)=Det^\frac{r}{n}(g)\Delta(y),\,\,\forall g\in G,\,\,\forall y\in\Omega
\end{equation}
where $Det$ is the usual determinant of linear mappings.  
It follows from this formula that the measure $\frac{d\xi}{\Delta^{\frac{n}{r}}(\xi)}$ is $G$-invariant in $\Omega.$ 

Finally, we recall the following  version of Sylvester's theorem.
$$\Omega=\{x\in\R^n:\,\,\,\,\Delta_k(x)>0, \hskip 2truemm k=1,\cdots,r\}.$$
\subsection{Geometric properties}
With the identification $\Omega\sim G/K$, the cone can be regarded as a Riemannian manifold with the $G$-invariant metric defined by
$$<\xi,\eta>_y:=(t^{-1}\xi|t^{-1}\eta)$$
if $y=t\cdot \textbf e$ with $t\in T$ and $\xi$ and $\eta$ are tangent vectors at $y\in\Omega$. We shall denote by $d_\Omega$ the corresponding invariant distance, and by $B_\delta(\xi)$ the associated ball centered at $\xi$ with radius $\delta$. Note that for each $g\in G$, the invariance of $d_\Omega$ implies that $B_\delta(g\xi)=gB_\delta(\xi)$. We also note that 
\begin{itemize}
\item
on  compact sets of $\mathbb R^n$ contained in $\Omega,$ the invariant distance $d_\Omega$ is equivalent to the Euclidean distance in $\mathbb R^n;$
\item
the associated balls $B_\delta$ in $\Omega$ are relatively compact in $\Omega$.
\end{itemize}
We also need the following crucial invariance properties of $d_\Omega$ and $\Delta_k$, obtained in \cite{BBGNPR, BBGR}.
\begin{lemma}
Let $\delta_0>0$. Then there is a constant $\gamma0$ depending only on $\delta_0$ and $\Omega$ such that for every $0<\delta\leq\delta_0$ and 
for $\xi$, $\xi'\in\Omega$ satisfying $d_\Omega(\xi,\xi')\leq \delta$ we have $$\frac{1}{\gamma}\leq\frac{\Delta_k(\xi)}{\Delta_k(\xi')}\leq\gamma,\,\,\forall k=1,\cdots,r.$$ 
\end{lemma}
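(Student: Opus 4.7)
The plan is to exploit the $G$-invariance of the distance $d_\Omega$ to reduce the problem to a compact neighborhood of the identity $\mathbf{e}$, and then to use the invariance/homogeneity properties of $\Delta_k$ under the subgroup $T=NA$ to express the ratio $\Delta_k(\xi')/\Delta_k(\xi)$ as $\Delta_k(\eta)$ for some $\eta$ in that neighborhood. The bound then comes from continuity and positivity of $\Delta_k$ on a compact set.

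First I would fix $\xi,\xi'\in\Omega$ with $d_\Omega(\xi,\xi')\leq\delta\leq\delta_0$. Since $T$ acts simply transitively on $\Omega$, I write $\xi=t\cdot\mathbf{e}$ for a unique $t\in T$, and set $\eta:=t^{-1}\cdot\xi'\in\Omega$. By $G$-invariance of $d_\Omega$ (and $T\subset G$),
\[
d_\Omega(\eta,\mathbf{e})=d_\Omega(t^{-1}\cdot\xi',t^{-1}\cdot\xi)=d_\Omega(\xi',\xi)\leq\delta_0,
\]
so $\eta$ lies in the fixed ball $\overline{B_{\delta_0}(\mathbf{e})}$, which is compact in $\Omega$ by one of the properties recalled in the excerpt.

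Next, I decompose $t=nP(a_0)$ with $n\in N$ and $a_0=\sum_{i=1}^r a_i c_i$, $a_i>0$. Using $N$-invariance of $\Delta_k$ and the homogeneity relation $\Delta_k(P(a_0)x)=a_1^2\cdots a_k^2\,\Delta_k(x)$, I get
\[
\Delta_k(\xi)=\Delta_k(nP(a_0)\mathbf{e})=\Delta_k(P(a_0)\mathbf{e})=a_1^2\cdots a_k^2\,\Delta_k(\mathbf{e})=a_1^2\cdots a_k^2,
\]
since $\Delta_k(\mathbf{e})=1$ (the projection of $\mathbf{e}$ onto $V^{(k)}$ is the identity $c_1+\cdots+c_k$ of that Jordan subalgebra), and similarly
\[
\Delta_k(\xi')=\Delta_k(nP(a_0)\eta)=a_1^2\cdots a_k^2\,\Delta_k(\eta).
\]
Therefore $\Delta_k(\xi')/\Delta_k(\xi)=\Delta_k(\eta)$, and the problem reduces to bounding $\Delta_k$ and $1/\Delta_k$ uniformly on $\overline{B_{\delta_0}(\mathbf{e})}$.

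To finish, each $\Delta_k$ is continuous and strictly positive on $\Omega$ (by Sylvester's theorem), so it attains a positive minimum $m_k$ and a finite maximum $M_k$ on the compact set $\overline{B_{\delta_0}(\mathbf{e})}$. Setting
\[
\gamma:=\max_{1\leq k\leq r}\max\bigl(M_k,\,m_k^{-1}\bigr),
\]
we obtain the desired two-sided bound, and $\gamma$ depends only on $\delta_0$ and on $\Omega$ (through the Jordan frame and the metric). The main conceptual step is the reduction using the decomposition of $t\in T$ together with the $N$-invariance of $\Delta_k$; once this identification $\Delta_k(\xi')/\Delta_k(\xi)=\Delta_k(\eta)$ is clean, the compactness argument is essentially automatic, so I do not anticipate a serious obstacle beyond checking that the homogeneity formula and $\Delta_k(\mathbf{e})=1$ are applied on the correct subalgebras.
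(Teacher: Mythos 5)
The paper does not prove this lemma; it states it as a known result imported from the references \cite{BBGR, BBGNPR}, so there is no in-text proof to compare against. Your argument is correct and is the natural one given the machinery recalled in Section 2: reduce to a neighborhood of $\mathbf e$ by $G$-invariance of $d_\Omega$, factor $t=nP(a_0)\in NA$, use $N$-invariance together with the $A$-homogeneity $\Delta_k(P(a_0)x)=a_1^2\cdots a_k^2\Delta_k(x)$ to obtain the clean identity $\Delta_k(\xi')/\Delta_k(\xi)=\Delta_k(\eta)$ with $\eta\in\overline{B_{\delta_0}(\mathbf e)}$, then conclude by continuity, strict positivity (Sylvester) and relative compactness of the ball. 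Two small points are handled correctly and are worth making explicit when writing this up: the order $t=nP(a_0)$ matters, since $N$-invariance must be applied to the outermost factor; and $\Delta_k(\mathbf e)=1$ because $P_k\mathbf e=c_1+\cdots+c_k$ is the identity of $V^{(k)}$. With those checks, the proof is complete and needs no further input from the paper.
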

\begin{lemma}
Let $\delta_0>0$ be fixed. Then there exist two constants $\eta_1>\eta_2>0$, depending only on $\delta_0$ and $\Omega$, such that for every $0<\delta\leq\delta_0$ we have
$$\{|\xi-e|<\eta_2\delta\}\subset B_\delta(e)\subset\{|\xi-e|<\eta_1\delta\}$$ 
\end{lemma}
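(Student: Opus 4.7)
The plan is to exploit two facts that are essentially built into the set-up. First, when we parametrise $\Omega \sim T\cdot\mathbf e$ and take $y=\mathbf e$ (so the element $t\in T$ is the identity), the $G$-invariant Riemannian metric satisfies $\langle\xi,\eta\rangle_{\mathbf e}=(\xi\mid\eta)$, which is precisely the Euclidean inner product on $V\simeq \mathbb R^n$. Second, by the bullet point preceding the lemma, $B_{\delta_0}(\mathbf e)$ is relatively compact in $\Omega$.

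I would first fix an open, relatively compact neighbourhood $U\Subset\Omega$ containing $\overline{B_{\delta_0}(\mathbf e)}$ and a Euclidean ball $\{|\xi-\mathbf e|<\rho_0\}$ around $\mathbf e$. In the global Euclidean coordinates of $V$, the matrix $\bigl(g_{ij}(y)\bigr)$ of the invariant metric is a smooth, positive-definite matrix-valued function on $\Omega$, hence its eigenvalues are pinched between two constants $0<c_1\le c_2$ on the compact set $\overline U$. Consequently, for any piecewise-smooth curve $\gamma$ contained in $\overline U$, the Riemannian length $L_\Omega(\gamma)$ and the Euclidean length $L_E(\gamma)$ satisfy $c_1 L_E(\gamma)\le L_\Omega(\gamma)\le c_2 L_E(\gamma)$.

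Now I set $\eta_1:=1/c_1$ and $\eta_2:=\min(1/c_2,\rho_0/\delta_0)$. For the inclusion $B_\delta(\mathbf e)\subset\{|\xi-\mathbf e|<\eta_1\delta\}$: given $\xi\in B_\delta(\mathbf e)$ with $\delta\le\delta_0$, any minimising sequence of curves joining $\mathbf e$ to $\xi$ inside $\overline{B_{\delta_0}(\mathbf e)}\subset\overline U$ has Riemannian length tending to $d_\Omega(\xi,\mathbf e)<\delta$, hence Euclidean length at most $\delta/c_1$, giving $|\xi-\mathbf e|\le\delta/c_1=\eta_1\delta$. For the reverse inclusion $\{|\xi-\mathbf e|<\eta_2\delta\}\subset B_\delta(\mathbf e)$: if $|\xi-\mathbf e|<\eta_2\delta\le\eta_2\delta_0\le\rho_0$, the Euclidean segment from $\mathbf e$ to $\xi$ lies in $\{|\cdot-\mathbf e|<\rho_0\}\subset U$; its Riemannian length is then at most $c_2|\xi-\mathbf e|<c_2\eta_2\delta\le\delta$, so $d_\Omega(\xi,\mathbf e)<\delta$.

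The only genuine subtlety is ensuring that the metric comparison can be applied over \emph{all} scales $0<\delta\le\delta_0$ with a linear dependence on $\delta$; this is exactly why one selects $\eta_2$ so that the Euclidean ball of radius $\eta_2\delta_0$ sits inside the coordinate neighbourhood $U$ on which the bi-Lipschitz constants are uniform. Once that choice is made, the two inclusions are immediate, and the constants $\eta_1,\eta_2$ depend only on $\delta_0$ and (through $c_1,c_2,\rho_0,U$) on the cone $\Omega$, as required.
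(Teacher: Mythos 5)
The paper does not prove this lemma; it cites it from \cite{BBGNPR} and \cite{BBGR}, so there is no in-text proof to compare against. Your argument — using smoothness of the invariant metric to get uniform bi-Lipschitz comparison with the Euclidean metric on a fixed compact neighbourhood of $\overline{B_{\delta_0}(\mathbf e)}$, then bounding lengths of curves — is indeed the standard way to establish it, and it is essentially correct.

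Two small points are worth making explicit. First, when you pass from $d_\Omega(\xi,\mathbf e)<\delta$ to a bound on $|\xi-\mathbf e|$, you need to justify why the curves in a minimizing sequence may be taken to lie in $\overline{U}$; this follows because a piecewise-smooth curve $\gamma$ from $\mathbf e$ with $L_\Omega(\gamma)<\delta\le\delta_0$ automatically stays inside $B_{\delta_0}(\mathbf e)$, since every point $\gamma(t)$ satisfies $d_\Omega(\mathbf e,\gamma(t))\le L_\Omega(\gamma|_{[0,t]})<\delta_0$. You assert the curves stay inside $\overline{B_{\delta_0}(\mathbf e)}$ but do not give this reason. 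Second, with $c_1\le c_2$ an eigenvalue pinching of the metric on $\overline U$, the length comparison reads $\sqrt{c_1}\,L_E(\gamma)\le L_\Omega(\gamma)\le\sqrt{c_2}\,L_E(\gamma)$; you dropped the square roots, which is harmless after relabeling, but should be corrected. Also, your choice $\eta_1=1/c_1$, $\eta_2=\min(1/c_2,\rho_0/\delta_0)$ only gives $\eta_1\ge\eta_2$; to guarantee $\eta_1>\eta_2$ as stated in the lemma, shrink $\eta_2$ slightly. None of these affects the substance of the argument.
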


The next corollary is an easy consequence of the previous lemma.

\begin{lemma}\label{lemma5}
Let $\delta_0=1$. Then there is a positive constant $\gamma$  such that for every $\delta \in (0, 1)$ such that $\eta_1 \delta < 1,$ we have
$$B_\delta(\xi)\subset\{y\in\Omega:\,\,\,\,y-\gamma\xi\in\Omega\}$$
for all $\xi\in\Omega$.
\end{lemma}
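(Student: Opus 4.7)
The plan is to exploit $G$-invariance to reduce to $\xi=e$, then combine the two preceding lemmas with the Jordan spectral structure of $\Omega$ to produce a single uniform constant. For the reduction, fix $g\in G$ with $g\cdot e=\xi$. By $G$-invariance of $d_\Omega$, $B_\delta(\xi)=g B_\delta(e)$, so every $y\in B_\delta(\xi)$ has the form $y=gz$ with $z\in B_\delta(e)$, and $y-\gamma\xi=gz-\gamma ge=g(z-\gamma e)$. Since $g\in G$ preserves $\Omega$, it suffices to produce one $\gamma>0$, independent of $z$, $\delta$ and $\xi$, such that $z-\gamma e\in\Omega$ whenever $z\in B_\delta(e)$ and $\delta\in(0,1/\eta_1)$.

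For any such $z$, two uniform estimates are available. The first preceding lemma (applied with $\delta_0=1$ and $\xi'=e$, noting that $\Delta_k(e)=1$) yields $\Delta(z)=\Delta_r(z)\geq 1/C_1$ for some constant $C_1=C_1(\Omega)$. The second preceding lemma, invoked under the hypothesis $\eta_1\delta<1$, gives $|z-e|<1$, whence $|z|\leq R:=1+|e|$, independently of $\delta$. Let $\lambda_1(z)\leq\cdots\leq\lambda_r(z)$ denote the Jordan eigenvalues of $z$, which are strictly positive because $z\in B_\delta(e)\subset\Omega$. Since $y\mapsto L_y$ is linear on the finite-dimensional space $V$, the spectral radius of $z$ is dominated by $C_0|z|$ for some constant $C_0=C_0(V)$, so $\lambda_j(z)\leq M:=C_0 R$ uniformly in $z$ and $\delta$. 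Combining this with $\prod_{j=1}^r\lambda_j(z)=\Delta(z)\geq 1/C_1$ forces
\[
\lambda_1(z)\;\geq\;\frac{1}{C_1 M^{r-1}}.
\]

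Setting $\gamma:=(2C_1 M^{r-1})^{-1}$ and using that $e$ is the Jordan identity, the Jordan eigenvalues of $z-\gamma e$ are $\lambda_j(z)-\gamma\geq\gamma>0$ for every $j$, so $z-\gamma e\in\Omega$ by the spectral characterization of the cone. The constant $\gamma$ depends only on $\Omega$, yielding exactly the uniform $\gamma$ demanded by the statement (independent of both $\xi$ and $\delta$).

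The main obstacle is ensuring that the upper bound $\lambda_j(z)\leq M$ really holds uniformly as $\delta$ ranges over all admissible values; this is precisely where the constraint $\eta_1\delta<1$ is used, since via the second preceding lemma it confines $z$ to a fixed bounded region of $V$ irrespective of $\delta$. Once that uniform upper bound on the eigenvalues is in hand, the rest is a direct assembly of the constants coming from the two preceding lemmas.
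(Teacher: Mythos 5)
Your proof is correct, and it supplies an actual argument where the paper offers none: the authors simply declare the lemma ``an easy consequence of the previous lemma'' (Lemma 2.2). Your reduction to $\xi=e$ via $G$-invariance and linearity of $g$, the norm bound $|z-e|<\eta_1\delta<1$ from Lemma 2.2, the lower bound $\Delta(z)\geq 1/C_1$ from Lemma 2.1 with $\delta_0=1$, and the spectral pinch $\lambda_1(z)\geq (C_1M^{r-1})^{-1}$ followed by the spectral characterization of $\Omega$ are all sound, and the resulting $\gamma$ is visibly independent of $\xi$ and $\delta$. It is worth noting that your extra input is genuinely needed: Lemma 2.2 alone only places $B_\delta(e)$ inside the Euclidean ball $\{|y-e|<1\}$, and that ball is \emph{not} contained in $\{y:\,y-\gamma e\in\Omega\}$ for any fixed $\gamma>0$ (consider $y=e-(1-\epsilon)c_1$ with $\epsilon$ small), so the uniform constant must come from either the determinant bounds of Lemma 2.1, as in your argument, or from an alternative compactness argument: $\overline{B_1(e)}$ is a compact subset of $\Omega$, so the smallest Jordan eigenvalue has a positive minimum $2\gamma$ there, which gives the conclusion at once and does not even use the hypothesis $\eta_1\delta<1$. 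Your route is slightly longer but more quantitative, tracking how $\gamma$ depends on the constants of Lemmas 2.1 and 2.2; the compactness route is shorter but non-effective.
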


\subsection{Gamma function in $\Omega$} The generalized gamma function in $\Omega$ is defined in terms of  the generalized power functions  by
$$\Gamma_\Omega(\s)=\int_\Omega e^{-(\xi|e)}\Delta_{\textbf s}(\xi)\frac{d\xi}{\Delta^\frac{n}{r}(\xi)} \quad \quad (\s =(s_1,\cdots,s_r)\in \mathbb R^r).$$
This integral is known  to converge absolutely if and only if 
$\Re e  \hskip 1truemm  s_k >\frac {n_k}2, \hskip 2truemm k=1,\cdots, r.$ In this case,
$$\Gamma_\Omega(\s)=(2\pi)^\frac{n-r}{2}\prod_{k=1}^r\Gamma \left (s_k-\frac{n_k}2 \right ),$$
where $\Gamma$ is the classical gamma function.  We shall denote $\Gamma_\Omega(\s)=\Gamma_\Omega (s)$ when $\s=(s,\cdots,s)$. In view of \cite{FK},  the Laplace transform of a generalized power function is given for all $y\in\Omega$ by
$$\int_\Omega e^{-(\xi|y)}\Delta_{\textbf s}(\xi)\frac{d\xi}{\Delta^\frac{n}{r}(\xi)}=\Gamma_\Omega(\s)\Delta_\s(y^{-1})$$
for each $\s\in\C^r$ such that $\Re e  \hskip 1truemm  s_k >\frac {n_k}2$ for all $k=1,\cdots,r$. We recall that 
$y^{-1}=t^{*-1}\cdot \textbf e$ whenever $y=t\cdot \textbf e$ with $t\in T.$ Here $t^\star$ denotes the adjoint of the transformation  $t\in T$ with respect to the inner product $(\cdot|\cdot).$
\\
The power function $\Delta_{\textbf s}(y^{-1})$ can be expressed in terms of the rotated Jordan frame $\{c_r,\cdots,c_1\}$. Indeed if we denote by $\Delta^*_k$, $k=1,\cdots,r$, the principal minors with respect to the rotated Jordan frame $\{c_r,\cdots,c_1\}$ then
$$\Delta_{\textbf s}(y^{-1})=[\Delta^*_{\s^*}(y)]^{-1},\,\,\forall \s=(s_1,\cdots,s_r)\in\C^r.$$
Here $\s^*=(s_r,\cdots,s_1)$.

\subsection{Bergman distance on the tube domain $T_\Omega$} Following \cite{BBGNPR}, we define a matrix function $\{g_{j,k}\}_{1\leq j,k\leq n}$ on $T_\Omega$ by
$$g_{j,k}(z)=\frac{\partial^2}{\partial z_j\partial \bar z_k}\log B(z,z)$$
where $B$ is the unweighted Bergman kernel of $T_\Omega,$ i.e $B=B_\s$ with $\s =(\frac nr,\cdots \frac nr)$. The map 
$$T_\Omega\ni z\mapsto \H_z$$
with $$\H_z(u,v)=\sum_{j,k=1}^n g_{j,k}(z)u_k\bar v_k,\,\,\,u=(u_1,\cdots,u_n),\,\,v=(v_1,\cdots,v_n)\in\C^n$$
defines a Hermitian metric on $\C^n$, called the Bergman metric. The Bergman length of a smooth path $\gamma:[0,1]\to T_\Omega$ is given by
$$l(\gamma)=\int_0^1\{\H_{\gamma(t)}(\overset{.}{\gamma}(t),\overset{.}{\gamma}(t))\}^\frac{1}{2}dt$$
and the Bergman distance $d(z_1,z_2)$ between two points $z_1$, $z_2$ of $T_\Omega$ is
$$d(z_1,z_2)=\inf_{\gamma}l(\gamma)$$
where the infimum is taken over all smooth paths $\gamma:[0,1]\to T_\Omega$ such that $\gamma(0)=z_0$ and $\gamma(1)=z_1$. It is well known that the Bergman distance $d$ is equivalent to the Euclidean distance on the compact sets of $\C^n$ contained in $T_\Omega$ and the Bergman balls in $T_\Omega$ are relatively compact in $T_\Omega$. Next, we again denote by $\R^n$  the group of translations by vectors in $\R^n$. Then the group $\R^n\times T$ acts simply transitively on $T_\Omega$ and  the Bergman distance $d$ is invariant under the automorphisms of $\R^n\times T$. 

\subsection{A Whitney decomposition of the tube domain $T_\Omega$}
In the sequel, the Bergman ball in $T_\Omega$ with centre at $z$ and radius $\eta$ will be denoted  $\mathbf{B}_\eta(z).$
\begin{lemma}\label{inclusion}
There exists a constant $R>1$ such that for all $\eta\in (0,1)$ and $z_0=x_0+iy_0\in T_\Omega$, the following inclusions hold :
$$\{x+iy\in T_\Omega:\,\,\|g^{-1}(x-x_0)\|<\frac{\eta}{R} \hskip 2truemm \textrm{and}\,\,y\in B_\frac{\eta}{R}(y_0)\}\subset\mathbf{B}_\eta(z_0), $$
$$ \mathbf{B}_\eta(z_0)\subset \{x+iy\in T_\Omega:\,\,\|g^{-1}(x-x_0)\|< {R\eta}\,\,\textrm{and}\,\,y\in B_ {R\eta} (y_0)\}$$
where $g$ is the element of $T$ satisfying $g\cdot e=y_0$.
\end{lemma}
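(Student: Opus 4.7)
The plan is to use the simply transitive action of $\R^n \times T$ on $T_\Omega$ to reduce to the base point $z_0 = ie$, and then combine the smoothness of the Bergman metric at $ie$ with the earlier sandwich lemma comparing $B_\delta(e)$ to Euclidean balls around $e$. In detail, the affine map $\Phi(z) = g^{-1}(z - x_0)$---a composition of a real translation and the linear action of $g^{-1} \in T$---is a Bergman isometry sending $z_0$ to $ie$. Moreover, by the $G$-invariance of $d_\Omega$ and the identity $g^{-1} y_0 = e$, we have $g^{-1} B_{\eta/R}(y_0) = B_{\eta/R}(e)$, so the two product sets appearing in the statement are carried by $\Phi$ to the corresponding sets centered at $ie$ (with $g$ replaced by the identity). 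It therefore suffices to prove both inclusions when $z_0 = ie$.

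For this special case, two local comparisons are combined. First, because $\mathbf{B}_1(ie)$ is relatively compact in $T_\Omega$ (Subsection~2.4) and the Bergman metric $\{g_{j,k}(z)\}$ is smooth and positive definite at $ie$, one obtains positive constants $c_1 \leq c_2$ such that
$$c_1 \|z - ie\| \leq d(z, ie) \leq c_2 \|z - ie\|$$
for every $z$ in a Euclidean neighborhood of $ie$ that contains $\mathbf{B}_1(ie)$. Second, the earlier sandwich lemma in Subsection~2.2 provides $\{|\xi - e| < \eta_2 \delta\} \subset B_\delta(e) \subset \{|\xi - e| < \eta_1 \delta\}$ for $\delta \leq 1$. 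Combining these: for $z = x + iy \in \mathbf{B}_\eta(ie)$ with $\eta < 1$, we get $\|x\|, |y - e| < \eta/c_1$, and the sandwich lemma places $y$ in $B_{\eta/(c_1 \eta_2)}(e)$; conversely, if $\|x\| < \eta/R$ and $y \in B_{\eta/R}(e)$, then $|y - e| < \eta_1 \eta/R$, hence $\|z - ie\| < (1 + \eta_1) \eta/R$, and thus $d(z, ie) < c_2 (1 + \eta_1) \eta/R$. Choosing $R$ larger than $c_2(1 + \eta_1)$, $1/c_1$, and $1/(c_1 \eta_2)$ yields both inclusions simultaneously.

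The main obstacle is establishing the first local equivalence uniformly on the whole unit Bergman ball $\mathbf{B}_1(ie)$---not merely infinitesimally at $ie$. This follows from the compactness of $\overline{\mathbf{B}_1(ie)}$ in $T_\Omega$ together with the continuity and positive-definiteness of the Bergman metric, via the standard Riemannian argument that bounds intrinsic distances in terms of the Euclidean distance on a compact convex Euclidean neighborhood containing the ball. Once this equivalence is in hand, the rest is routine constant-chasing.
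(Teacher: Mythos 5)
Your proposal is correct and follows essentially the same route as the paper: reduce to the base point $z_0=ie$ via the Bergman isometry $\Phi(z)=g^{-1}(z-x_0)$, compare the Bergman distance with the Euclidean distance on the compact set $\overline{\mathbf B_1(ie)}$, and convert Euclidean nearness in the imaginary part into membership in a ball $B_\delta(e)$ via the sandwich estimate of Lemma~2.2. One small technical point worth flagging: in the reverse inclusion you apply Lemma~2.2 with $\delta=\eta/(c_1\eta_2)$, which may exceed $\delta_0=1$ when $c_1\eta_2<1$; this is harmless---either invoke Lemma~2.2 with a larger fixed $\delta_0$, or simply note that the imaginary parts of $\overline{\mathbf B_1(ie)}$ form a compact subset of $\Omega$ on which $d_\Omega$ and the Euclidean distance are already Lipschitz-equivalent---but as written the constant-chasing is slightly fragile. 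The paper's own proof compresses the same argument into a reduction step plus a pointer to the relevant equivalences, so your expansion is a faithful (and somewhat more explicit) rendering of it.
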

\begin{proof}
From the invariance under translations and automorphisms of $T$ we have that
$$g^{-1}(x-x_0)+ig^{-1}y\in {\mathbf{B}_\eta(ie)}$$
for all $x+iy\in \mathbf{B}_\eta(z_0)$. We recall that the  Bergman distance $d$ and the Euclidean distance $d_{Eucl}$ are equivalent on compact sets of $\C^n$ contained in $T_\Omega.$ So there exists a constant $R>1$ such that 
$$\frac{1}{R}d(X+iY,ie)<d_{Eucl}(X+iY,ie)<Rd(X+iY,ie)$$
for all $X+iY\in\overline{\mathbf{B}_1(ie)}$. The proof of the lemma follows from the following equivalence $$d_{Eucl}(X_1+iY_1,X_2+iY_2)\thickapprox\max(\|X_1-X_2\|,\|Y_1-Y_2\|)$$
and the equivalence given in Lemma 2.2 between $d_\Omega$ and the Euclidean distance $||\cdot||$ in $\mathbb R^n$ on compact sets of $\mathbb R^n$ contained in $\Omega.$
\end{proof}

The starting point of our analysis is the following  Whitney decomposition of the cone $\Omega$ which was obtained e.g. in \cite{BBGR, BBGNPR}.

\begin{lemma}\label{lemma7}
There is a positive integer $N$ such that given $\delta\in (0,1)$, one can find a sequence of points $\{y_j\}_{j=1,2,\cdots}$ in $\Omega$ with the following property:
\begin{enumerate}[\upshape (i)]
\item the balls $B_{\frac{\delta}{2}}(y_j)$ are pairwise disjoint;
\item the balls $B_\delta(y_j)$ cover $\Omega$;
\item each point of $\Omega$ belongs to at most $N$ of the balls $B_\delta(y_j)$
\end{enumerate}
\end{lemma}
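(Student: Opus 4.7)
The strategy is to build $\{y_j\}$ as a maximal $\delta$-separated set in $(\Omega, d_\Omega)$, and then read off the three properties from the $G$-invariance of $d_\Omega$ together with the $G$-invariant measure $d\mu(\xi)=\Delta^{-n/r}(\xi)\,d\xi$.

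First I would fix $\delta\in(0,1)$ and invoke Zorn's lemma to produce a maximal family $\{y_j\}_{j\ge 1}\subset\Omega$ satisfying $d_\Omega(y_i,y_j)\ge\delta$ for $i\ne j$. The family is countable because the open balls $B_{\delta/2}(y_j)$ are pairwise disjoint (by the triangle inequality) and $\Omega$ is second countable; this disjointness is precisely (i). Property (ii) follows from maximality: for any $\xi\in\Omega$, the family $\{y_j\}\cup\{\xi\}$ cannot be $\delta$-separated, so $d_\Omega(\xi,y_j)<\delta$ for some $j$, i.e.\ $\xi\in B_\delta(y_j)$.

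For (iii), suppose $\xi\in\bigcap_{j\in J}B_\delta(y_j)$. Then each $y_j$ with $j\in J$ lies in $B_\delta(\xi)$, and by the triangle inequality $B_{\delta/2}(y_j)\subset B_{3\delta/2}(\xi)$. Since $G$ acts transitively on $\Omega$ and both $d_\Omega$ and $d\mu$ are $G$-invariant (the latter via formula \eqref{Delta}), the invariant volume $m(r):=\mu(B_r(y))$ is independent of $y\in\Omega$. Using the disjointness already established in (i),
\[
|J|\,m(\delta/2)\;=\;\sum_{j\in J}\mu(B_{\delta/2}(y_j))\;\le\;\mu(B_{3\delta/2}(\xi))\;=\;m(3\delta/2),
\]
so $|J|\le m(3\delta/2)/m(\delta/2)$.

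The delicate point, which I expect to be the main obstacle, is showing that this last ratio is bounded \emph{independently of} $\delta\in(0,1)$. I would apply Lemma 2.2 with, say, $\delta_0=3/2$: for every $r\le 3/2$ the Riemannian ball $B_r(e)$ is sandwiched between Euclidean balls of radii $\eta_2 r$ and $\eta_1 r$. Since $\Delta$ is continuous and strictly positive at $e$, the invariant density $\Delta^{-n/r}$ is bounded above and below on a fixed Euclidean neighbourhood of $e$, whence $m(r)\asymp r^n$ uniformly for $r$ in any compact subinterval of $(0,3/2]$. Consequently $m(3\delta/2)/m(\delta/2)\le C$ for some $C$ depending only on $\Omega$, and we may take $N=\lceil C\rceil$. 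Without both the quantitative Euclidean comparison of Lemma 2.2 and the $G$-invariance used to transport the estimate from $e$ to an arbitrary centre, the bound on the overlap would degenerate either for small $\delta$ or far from the identity element of $\Omega$.
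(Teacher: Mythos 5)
The paper does not actually prove this lemma; it cites it from \cite{BBGR, BBGNPR} (``which was obtained e.g.\ in \cite{BBGR, BBGNPR}''). So there is no in-paper proof to compare against, and your argument must be assessed on its own merits.

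Your proof is correct and is the standard maximal-separated-set argument. It is also, in spirit, exactly the technique the paper \emph{does} use later to establish Lemma \ref{lemma8} (the analogous Whitney decomposition along the $\mathbb{R}^n$ slices): Zorn's lemma to extract a maximal $\delta/R$-separated family, maximality for the covering property, separation for disjointness of the shrunken balls, and a volume-counting argument (there with the invariant measure $dm$ on $T_\Omega$) for the bounded-overlap constant $N$. All three ingredients you invoke --- the $G$-invariance of $d_\Omega$ and of $d\mu=\Delta^{-n/r}(\xi)\,d\xi$ (which comes from \eqref{Delta}), the Euclidean comparison of Lemma 2.2 with a $\delta_0\geq 3/2$, and the relative compactness of Bergman/Riemannian balls to keep the density bounded --- are the right ones, and the packing inequality $|J|\,m(\delta/2)\leq m(3\delta/2)$ is exactly what gives $N$ independent of $\delta$ and of the base point.

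One phrasing slip: you write that $m(r)\asymp r^n$ ``uniformly for $r$ in any compact subinterval of $(0,3/2]$.'' Uniformity on compact subintervals away from $0$ follows trivially from continuity and positivity of $m$, and would \emph{not} suffice to bound $m(3\delta/2)/m(\delta/2)$ as $\delta\to 0^+$. The reasoning you give immediately before --- Lemma 2.2 with $\delta_0=3/2$ plus bounded density of $\Delta^{-n/r}$ on the relatively compact set $\overline{B_{3/2}(\mathbf{e})}$ --- actually yields $c_1(\eta_2 r)^n\leq m(r)\leq c_2(\eta_1 r)^n$ for \emph{all} $r\in(0,3/2]$, which is the uniformity you need. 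Replace ``any compact subinterval of $(0,3/2]$'' by ``all $r\in(0,3/2]$'' and the proof is complete.
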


\begin{defin}
The sequence $\{y_j\}$ is called a $\delta$-{\it lattice of} $\Omega.$
\end{defin}
Our goal is to obtain an atomic decomposition theorem for holomorphic functions in  $A_\s^{p,q}$ spaces. To this end, we need to derive a suitable version of the classical Whitney decomposition  of $\R^n$. Let $\{y_j\}$ be a $\delta$-{\it lattice of} $\Omega$ and let $g_j \in T$ be such that $g_j \cdot \textbf e = y_j.$ Let $R>1$ be a constant like in Lemma \ref{inclusion}.
We adopt the following notations: 
$$I_{l,j}=\{x\in\R^n:\,\,\,\|g_j^{-1}(x-x_{l,j})\|<\frac{\delta}{R}\}$$
$$I'_{l,j}=\{x\in\R^n:\,\,\,\|g_j^{-1}(x-x_{l,j})\|<\frac{\delta}{2R}\}$$
where  $\{x_{l,j}\}$ is a sequence in $\R^n$ to be determined.

From Lemma \ref{inclusion} we have immediately the following.
\begin{rem}\label{remark1}
For the  constant $R>1$ of Lemma \ref{inclusion}, the following inclusion holds
$$I_{l,j}+iB_{\frac \delta R} (y_j)\subset \textbf{B}_{\delta}(x_{lj}+iy_j).$$
\end{rem} 
\begin{lemma}\label{lemma8}
Let $\delta \in (0, 1).$ There exist a positive constant $R>1,$ a positive integer $N$ and a sequence  of points $\{x_{l,j}\}_{l\in\Z,\hskip 1truemm j\in\N}$ in $\R^n$ such that the following hold.
\begin{enumerate}[\upshape (i)]
\item $\{I_{l,j}\}_l$ form  a cover of $\R^n$;
\item $\{I'_{l,j}\}_l$ are pairwise disjoint;
\item  for each $j,$ every point of $\R^n$ belongs to at most $N$ balls $I_{l,j}$.
\end{enumerate}
\end{lemma}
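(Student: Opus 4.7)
The plan is to construct, for each fixed $j$, the sequence $\{x_{l,j}\}_{l\in\Z}$ as a maximal separated set in $\R^n$ with respect to the metric $d_j(x,x'):=\|g_j^{-1}(x-x')\|$, and then read off the three properties from maximality, separation, and a volume comparison. The constant $R>1$ is the one supplied by Lemma \ref{inclusion}, and the integer $N$ will come out of the argument.

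First I would fix $j$ and appeal to Zorn's lemma (or the separability of $\R^n$) to choose a maximal countable family $\{x_{l,j}\}_{l\in\Z}\subset\R^n$ with the property that $d_j(x_{l,j},x_{l',j})\geq\delta/R$ whenever $l\neq l'$. For (i), maximality implies that for any $x\in\R^n$ we cannot have $d_j(x,x_{l,j})\geq\delta/R$ for every $l$ (else we could adjoin $x$), so $x\in I_{l,j}$ for some $l$. For (ii), if some $x$ lay in both $I'_{l,j}$ and $I'_{l',j}$ with $l\neq l'$, the triangle inequality for $d_j$ would give $d_j(x_{l,j},x_{l',j})<\delta/R$, contradicting separation.

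For (iii), I would translate everything back to ordinary Euclidean balls via the linear change of variables $u=g_j^{-1}x$. Under this change the set $I_{l,j}$ becomes a Euclidean ball of radius $\delta/R$ around $g_j^{-1}x_{l,j}$, and $I'_{l,j}$ becomes one of radius $\delta/(2R)$; moreover, volumes transform by the common factor $|\det g_j|$, which cancels. Thus, if a point $x$ belongs to $k$ distinct sets $I_{l,j}$, then the corresponding disjoint half-balls $I'_{l,j}$ (pairwise disjoint by (ii)) all lie inside the $d_j$-ball of radius $\delta/R+\delta/(2R)=3\delta/(2R)$ around $x$. A direct volume comparison after the change of variables gives
$$k\cdot \mathrm{vol}_{Eucl}\!\bigl(B(0,\tfrac{\delta}{2R})\bigr)\leq \mathrm{vol}_{Eucl}\!\bigl(B(0,\tfrac{3\delta}{2R})\bigr),$$
so $k\leq 3^n$, a bound independent of $j$ and of $\delta$. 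Setting $N:=3^n$ (or enlarging $N$ to also accommodate the one already furnished by Lemma \ref{lemma7}) yields (iii).

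The only real subtlety is ensuring that the constant $N$ in (iii) is uniform in $j$, because the shapes of the balls $I_{l,j}$ depend on $g_j$ and degenerate near the boundary of $\Omega$. This is exactly why the volume estimate must be carried out after pulling back by $g_j^{-1}$: then all balls become standard Euclidean balls and the ratio of volumes no longer sees $g_j$. Everything else is a routine application of maximality and the triangle inequality for the pullback metric $d_j$.
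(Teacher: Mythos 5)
Your argument is correct, and for parts (i) and (ii) it coincides with the paper's: fix $j$, take a maximal $\tfrac{\delta}{R}$-separated set with respect to the pullback metric $d_j(x,x')=\|g_j^{-1}(x-x')\|$ (the paper realizes this through the collection $\mathcal{A}_j$ and Zorn's lemma), then deduce covering from maximality and disjointness of the $I'_{l,j}$ from the triangle inequality. The genuine divergence is in part (iii). The paper lifts the problem into the tube domain: it shows the Bergman balls $\mathbf{B}_{\delta/(2R^2)}(x_{l,j}+iy_j)$ are pairwise disjoint and, for $l\in L_j(x)$, all sit inside a fixed Bergman ball around $x+iy_j$, then compares volumes with respect to the $G$-invariant measure $dm(\xi+i\sigma)=\Delta^{-2n/r}(\sigma)\,d\xi\,d\sigma$, whose invariance under $\mathbb{R}^n\times T$ makes the ratio of ball volumes independent of $j$. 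You instead stay entirely in $\R^n$ and pull back by the linear map $g_j^{-1}$: the $d_j$-balls become honest Euclidean balls, the Jacobian $|\det g_j|$ cancels on both sides of the volume comparison, and the standard packing estimate gives the explicit bound $N=3^n$. Both arguments correctly neutralize the $j$-dependence of the ellipsoid shapes, but yours is more elementary, avoids invoking Lemma \ref{inclusion} and the invariant measure on $T_\Omega$ for this step, and produces a concrete constant. One small refinement worth making explicit: Zorn's lemma by itself yields a maximal separated set with no cardinality bound; the countability needed to index by $l\in\Z$ should be stated as a separate observation, either from separability of $\R^n$ as you gesture at, or, as the paper does, by assigning to each $I'_{l,j}$ a distinct point of $\mathbb{Q}^n$.
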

\begin{proof}
Fix $j$ in $\N$ and define the collection $\mathcal A_j$ of sets in $\mathbb R^n$ by
$$\mathcal{A}_{j}=\left\{A\subset\mathbb{R}^{n}:\,\,\forall\, x,\,\,y\in A\,\,\textrm{distinct}\,\,\, \|g_{j}^{-1}(x-y)\|\geq \frac{\delta}{R}\right\}.$$
Clearly the collection $\mathcal{A}_{j}$ is non empty. Indeed the sets $\{\frac{\delta}{R} y_{j}, 0_{\R^n}\}$ are members of $\mathcal{A}_{j}$. Furthermore, the collection $\mathcal A_j$ is partially ordered with respect to inclusion.\\
Let $\mathcal C$ be a totally ordered subcollection of $\mathcal A_j.$ We set $F=\underset{A\in \mathcal C}{\cup} A.$ Given two distinct elements $x, y$ of $F,$ there are two members $A_1$ and $A_2$ of $\mathcal C$ such that $x\in A_1$ and $y\in A_2.$ But either $A_1 \subset A_2$ or $A_2 \subset A_1.$ So we have either $x, y \in A_1$ or $x, y \in A_2.$ Hence $||g_j^{-1} (x-y)|| \geq \frac \delta R.$ This shows that $F$ is a member of $\mathcal A_j.$ In other words, the collection $\mathcal A_j$ is inductive. An application of  Zorn's lemma then gives that the collection $\mathcal A_j$ has a maximal member $E_j.$ We write $E_j=\{x_{l,j}\}_{l\in L_{j}}.$ 

To prove assertion (ii), consider $l$ and $k$ such that $l\neq k$ and assume that $I'_{l,j}\cap I'_{k,j}$ contain at least an element $x$. Then
$$\|g_{j}^{-1}(x_{l,j}-x_{k,j})\|\leq \|g_{j}^{-1}(x_{l,j}-x)\| + \|g_{j}^{-1}(x-x_{k,j})\|<\frac{\delta}{2R}+ \frac{\delta}{2R}=\frac \delta R.$$ This would contradict the property that $\{x_{l,j},\,\,x_{k,j}\}$ is a subset of 
${E}_j,$ which is a member of $\mathcal A_j$. 
 
For assertion (i), let us suppose $\underset{l\in L_{j}}{\cup} I_{l,j}\neq \mathbb{R}^{n}$. Then there exists $\xi_{j}\in\mathbb{R}^{n}$ such that $\xi_{j}\notin\underset{l\in
L_{j}}{\cup}I_{l,j}$. Clearly the set $E_{j}\cup\{\xi_{j} \}$
is a member of $\mathcal{A}_{j}$. This would contradict the maximality of $E_j$ in $\mathcal A_j$. This completes the proof of assertion (i).

To prove assertion (iii), 
we fix $j.$ Given $x\in \mathbb R^n,$
it follows from assertion (i) that there exists a subset $L_j (x)$ of $L_j$ such that $$x\in\underset{l\in L_j(x)}{\cap}I_{l,j}.$$ 
We will show that there is a positive integer $N$ independent of $\delta$ such that $Card \hskip 2truemm L_j (x)\leq N$ for all $j\in \mathbb N$ and $x\in \mathbb R^n.$
It follows from Lemma 2.4 and Lemma 2.5 that for every $l\in L_{j},$
$${\mathbf B}_{\frac \delta {2R^2}} (x_{l, j} +iy_j) \subset I'_{l, j} \times B_{\frac \delta {2R}} (y_j).$$
So the balls ${\mathbf B}_{\frac \delta {2R^2}} (x_{l, j} +iy_j), \hskip 2truemm l\in L_{j}$ are pairwise disjoint since $R>1.$ Moreover, for every  $l\in L_j(x),$ we have
$$
\begin{array}{clcr}
{\mathbf B}_{\frac \delta {2R^2}} (x_{l, j} +iy_j) &\subset \{\xi +i\sigma \in T_\Omega: \|g_{j}^{-1}(\xi-x)\| <\frac{3\delta}{2R} \hskip 2truemm {\rm and} \hskip 2truemm \sigma \in B_{\frac {\delta} {2R}} (y_j)\}\\
& \subset {\mathbf B}_{\frac {3\delta}2} (x+iy_j).
\end{array}
$$ 
For the first inclusion, we applied the triangle inequality. We obtain
$$\underset{l\in L_{j}(x)}{\cup}\mathbf{B}_{\frac{\delta}{2R}}(x_{l,j}+iy_j)\subset\mathbf{B}_{ \frac{3\delta R}{2} }(x+iy_j).$$
We call $m$ the invariant measure on $T_\Omega$ given by
$$dm(\xi+i\sigma)={\Delta^{-\frac{2n}{r}}(\sigma)}d\xi d\sigma.$$
We conclude that
$$Card \hskip 1truemm L_j (x) \leq \frac {m\left (\mathbf B_{\frac {3\delta R}2} (i\mathbf e)\right)}{m\left (\mathbf B_{\frac {\delta}{2R}} (i\textbf e)\right )},$$
because
$$m\left (\underset{l\in L_{j}(x)}{\cup}\mathbf{B}_{\frac{\delta}{2R}}(x_{l,j}+iy_j)\right )=Card\hskip 1truemm L_j (x) \times m\left (\mathbf{B}_{\frac{\delta}{2R}}(i\mathbf e)\right ) \leq m\left ({\mathbf B}_{\frac {3\delta R}{2}} (i\mathbf e)\right ).$$
We finally prove that the collection $\{I_{l, j}\}_{l, j}$ is countable.  It suffices to show that for each $j,$ the collection $\{I_{l, j}\}_{l\in L_j}$ is countable. We fix $j.$ To every set $I'_{l, j},$ we assign a point of $\mathbb Q^n$ belonging to $I'_{l, j}.$ Since $\bigcap \limits_l I'_{l, j} = \emptyset,$ this defines a one-to-one correspondence from the collection $\{I'_{l, j}\}_{l\in L_j}$ to a subset of $\mathbb Q^n.$ This shows that the collection $\{I'_{l, j}\}_{l\in L_j}$ is at most countable. Moreover the collection $\{I_{l, j}\}_{l\in L_j}$ which has the same cardinal as the collection $\{I'_{l, j}\}_{l\in L_j}$ is infinite: the proof is elementary since $\bigcup \limits_{l\in L_j} I_{l, j}=\mathbb R^n$ is unbounded. The proof of the lemma is complete.


\end{proof}

\begin{rem}
We just proved in Lemma \ref{lemma8} that for each $j=1, 2,\cdots,$ the index set $L_j$ is countable. In analogy with the one-dimensional case \cite{RT}, we took $L_j = \mathbb Z$ in the statement of Lemma \ref{lemma8} and in the statement of Theorem A.
\end{rem}

\subsection{A $\delta$-lattice in $T_\Omega$}
\begin{defin}
The sequence $\{z_{lj}=x_{lj} +iy_j\}_{l\in \mathbb Z, j\in \mathbb N}$ defined in Lemma \ref{lemma8} will be called a $\delta$-{\it lattice \hskip 1truemm in} $T_\Omega$.
\end{defin}

We have the following lemma.

\begin{lemma}
Let $\{z_{l,j}=x_{l,j}+iy_j\}_{l\in \mathbb Z, j\in \mathbb N}$ be a $\delta$-lattice in $T_\Omega$. There exists a positive constant $C=C(\delta, R)$ such that for all $l\in \mathbb Z$, $j\in \mathbb N,$ the following hold.
\begin{enumerate}[\upshape (a)]
\item $$\int_{I_{l,j}}dx\leq C\Delta^\frac{n}{r}(y_j).$$
\item $$\int_{\R^n}\sum_{l\in L_j}\chi_{\{x\in I_{l,j}:\,\,\textbf{d}(x+iy,w)<1\}}(x)dx\leq C\Delta^\frac{n}{r}(y_j),\,\,\forall y\in B_\delta(y_j), \forall w\in T_\Omega .$$
\end{enumerate}
\end{lemma}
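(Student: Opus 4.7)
The plan is to reduce each estimate to a computation on a fixed Euclidean (or Bergman) ball by exploiting the group action of $T$ on $\Omega$. For (a) a single linear change of variables will produce the Jacobian $\mathrm{Det}(g_j)=\Delta^{n/r}(y_j)$ directly. For (b) the finite overlap of the $I_{l,j}$ lets me collapse the sum and replace it by the Lebesgue measure of a single horizontal slice of a Bergman ball, which is again bounded by $\Delta^{n/r}(w_2)$ by invariance; Lemma 2.2 then lets me replace $w_2$ by $y_j$.

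For (a), I substitute $u=g_j^{-1}(x-x_{l,j})$, which transforms $I_{l,j}$ into the Euclidean ball $\{\|u\|<\delta/R\}$. Applying formula \eqref{Delta} with $g=g_j$ and $y=\mathbf e$, together with $g_j\cdot\mathbf e=y_j$ and $\Delta(\mathbf e)=1$, yields $\mathrm{Det}(g_j)=\Delta^{n/r}(y_j)$, so $dx=\Delta^{n/r}(y_j)\,du$ and
\[
\int_{I_{l,j}}dx \;=\; \Delta^{n/r}(y_j)\int_{\|u\|<\delta/R}du \;=\; C(\delta,R)\,\Delta^{n/r}(y_j).
\]

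For (b), I pull the sum inside the integral and invoke the finite overlap property $\sum_{l}\chi_{I_{l,j}}(x)\leq N$ from Lemma \ref{lemma8}(iii), obtaining
\[
\int_{\R^n}\sum_{l\in L_j}\chi_{\{x\in I_{l,j}:\,\mathbf{d}(x+iy,w)<1\}}(x)\,dx \;\leq\; N\cdot\bigl|\{x\in\R^n:\mathbf{d}(x+iy,w)<1\}\bigr|.
\]
Writing $w=w_1+iw_2$ and choosing $h\in T$ with $h\cdot\mathbf e=w_2$, the argument of Lemma \ref{inclusion}, extended to a ball of radius $1$ by the same equivalence of Bergman and Euclidean distances on compact sets, provides a constant $R_1>1$ such that
\[
\mathbf{B}_1(w)\;\subset\;\{\xi+i\sigma:\ \|h^{-1}(\xi-w_1)\|<R_1,\ \sigma\in B_{R_1}(w_2)\}.
\]
Consequently the slice at height $y$ is empty unless $y\in B_{R_1}(w_2)$, and is otherwise contained in $\{x:\|h^{-1}(x-w_1)\|<R_1\}$; the same change of variables as in (a), with $g_j$ replaced by $h$, gives $|\{x:\mathbf{d}(x+iy,w)<1\}|\leq C\,\Delta^{n/r}(w_2)$. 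In the nonempty case, $d_\Omega(y,w_2)<R_1$ and $d_\Omega(y,y_j)<\delta<1$ force $d_\Omega(y_j,w_2)<R_1+1$, so Lemma 2.2 applied with $\delta_0=R_1+1$ yields $\Delta(w_2)\leq\gamma\,\Delta(y_j)$, completing the estimate.

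The only delicate point, and the main obstacle, is making sure the comparison constant in this final step does not depend on $\delta$. Since $R$ and $R_1$ are determined only by $\Omega$ (they come from Lemma \ref{inclusion} and its mild extension to radius $1$), the threshold $\delta_0=R_1+1$ is universal and Lemma 2.2 produces a constant $\gamma=\gamma(R_1+1,\Omega)$; thus the final constant $C$ depends only on $\delta$, $R$, and $\Omega$, as the statement requires.
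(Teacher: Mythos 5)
Your proof is correct and follows essentially the same route as the paper's: part (a) is the identical change of variables $u=g_j^{-1}(x-x_{l,j})$ with Jacobian $\mathrm{Det}(g_j)=\Delta^{n/r}(y_j)$, and part (b) uses finite overlap plus the slice-of-a-Bergman-ball estimate via Lemma~\ref{inclusion}, followed by the minor-comparison lemma. One small correction: the final comparison of $\Delta(w_2)$ with $\Delta(y_j)$ is Lemma~2.1 (the invariance of $\Delta_k$ under bounded $d_\Omega$-perturbations), not Lemma~2.2; and your care in extending Lemma~\ref{inclusion} to radius $1$ is a reasonable precaution, though the paper applies it directly since the underlying distance-equivalence is stated on $\overline{\mathbf{B}_1(i\mathbf e)}$.
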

\begin{proof}
We denote $Det$ the usual determinant of an endomorphism of $\mathbb R^n.$
\begin{enumerate}
\item[(a)] 
We set $u=g_j^{-1}(x-x_{lj}).$ Then 
\begin{align*}
\int_{I_{l,j}}dx&=\int_{\|u\|< \frac{\delta}{R}} Det\hskip 1truemm  (g_j)du\\
&=\Delta^\frac{n}{r}(y_j)\int_{\|u\|<\frac{\delta}{R}}du=C\Delta^\frac{n}{r}(y_j).
\end{align*} 
For the second equality, we applied the formula (\ref{Delta}). This proves assertion (a). 
\item[(b)]
By assertion (iii) of Lemma \ref{lemma8}, we have
$$\sum_{l\in L_j}\chi_{I_{l,j}} (x) \leq N$$
for every $j.$
Then 
\begin{align*}
\int_{\R^n}\sum_{l\in L_j}\chi_{\{x\in I_{l,j}:\,\,\textbf{d}(x+iy,w)<1\}}(x)dx&\leq N\int_{\{x\in \mathbb R^n: \hskip 1truemm x+iy \in \textbf B_1 (w)\}} dx
\end{align*}
We set $w=u+iv.$ By Lemma 2.4, we have the implication
$$x+iy\in {\textbf B}_1 (w) \Rightarrow ||g^{-1} (x-u)||<R \hskip 2truemm {\rm and} \hskip 2truemm y\in B_R (v)$$
with $\hskip 2truemm g\cdot \textbf e = v.$
So
$$\int_{\{x\in \mathbb R^n: \hskip 2truemm x+iy \in \textbf B_1 (w)\}} dx \leq \int_{\{x\in \mathbb R^n: \hskip 2truemm ||g^{-1} (x-u)||<R\}} dx = CDet\hskip 1truemm  (g) = C\Delta^{\frac nr} (v).$$
But $d_\Omega (y, y_j) < \delta$ and $d_\Omega (y, v) < R.$ This implies that $d(v, y_j) < \delta + R.$ Henceforth $\Delta^{\frac nr} (v) \leq C \Delta^{\frac nr} (y_j)$ by Lemma 2.1. This gives assertion (b).
\end{enumerate}
\end{proof} 

\section{Atomic decomposition}
\subsection{The sampling theorem}
We first record the following lemma (See e.g. \cite{BBGNPR}). 
\begin{lemma}\label{lemma10}
Let $1\leq p < \infty.$ Given $\delta\in (0,1)$, there exists a positive constant $C$ such that, for each holomorphic function $F$ in $T_\Omega$ we have 
\begin{enumerate}[\upshape (i)]
\item $|F(z)|^p\leq C\delta^{-2n}\int_{\textbf {B}_\delta(z)}|F(u+iv)|^p\frac{du\,dv}{\Delta^\frac{2n}{r}(v)}$;
\item if  $d (z,\zeta)<\delta$ then
$$|F(z)-F(\zeta)|^p\leq C\delta^{p}\int_{\textbf B_1(z)}|F(u+iv)|^p\frac{du\,dv}{\Delta^\frac{2n}{r}(v)}$$
\end{enumerate}
\end{lemma}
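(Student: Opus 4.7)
The plan is to exploit the invariance of both the Bergman distance $d$ and the measure $\frac{du\,dv}{\Delta^{2n/r}(v)}$ under the simply transitive action of $\mathbb{R}^n \times T$ on $T_\Omega$, together with the classical sub-mean value property for plurisubharmonic functions and a Cauchy-type gradient estimate.

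For assertion (i), I would first reduce to the basepoint $z = i\mathbf{e}$. Given $z_0 = x_0 + iy_0 \in T_\Omega$, pick $g \in T$ with $g \cdot \mathbf{e} = y_0$ and set $\Phi(w) = x_0 + g \cdot w$, so that $\Phi$ is a biholomorphism of $T_\Omega$ sending $i\mathbf{e}$ to $z_0$. Formula (\ref{Delta}) gives $\mathrm{Det}(g) = \Delta^{n/r}(y_0)$, and a direct change of variables shows that $\frac{du\,dv}{\Delta^{2n/r}(v)}$ is $\Phi$-invariant; since $\Phi$ belongs to $\mathbb{R}^n \times T$, the Bergman ball $\mathbf{B}_\delta(z_0)$ is the image of $\mathbf{B}_\delta(i\mathbf{e})$. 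Hence it suffices to prove (i) at $i\mathbf{e}$. There, Lemmas 2.2 and 2.4 show that $\mathbf{B}_\delta(i\mathbf{e})$ contains a Euclidean polydisc $P_\delta$ centered at $i\mathbf{e}$ of polyradius comparable to $\delta$. The sub-mean value property of the plurisubharmonic function $|F|^p$ on $P_\delta$ then yields
\[
|F(i\mathbf{e})|^p \leq C\delta^{-2n}\int_{P_\delta}|F(u+iv)|^p\,du\,dv;
\]
on $P_\delta$ the weight $\Delta^{-2n/r}(v)$ is bounded above and below by positive constants (Lemma 2.1), so this is equivalent to the corresponding inequality for the invariant measure, and transferring back via $\Phi$ gives (i).

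For assertion (ii), I would deduce a pointwise gradient bound from (i). Applying Cauchy's inequality to $F$ on a Euclidean polydisc of fixed small radius around any $w \in \mathbf{B}_\delta(z)$ and combining with (i) yields
\[
|dF(w)|^p \leq C\int_{\mathbf{B}_1(z)}|F(u+iv)|^p\,\frac{du\,dv}{\Delta^{2n/r}(v)}
\]
for every $w \in \mathbf{B}_\delta(z)$, provided $\delta$ is sufficiently small (which we may assume, since for $\delta$ close to $1$ the inequality follows from (i) applied separately to $F(z)$ and $F(\zeta)$). Joining $z$ to $\zeta$ by a smooth path $\gamma$ of Bergman length at most $\delta$, we have $\gamma \subset \mathbf{B}_\delta(z)$, and the equivalence of the Bergman and Euclidean metrics on the relatively compact closure $\overline{\mathbf{B}_1(z)}$ gives
\[
|F(z) - F(\zeta)| \leq \int_0^1 |dF(\gamma(t))|\,|\dot\gamma(t)|_{\mathrm{Eucl}}\,dt \leq C\delta\sup_{w \in \mathbf{B}_\delta(z)}|dF(w)|.
\]
Raising to the $p$-th power and inserting the gradient estimate yields (ii).

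The main obstacle is the bookkeeping in the reduction $\Phi$: one must verify that the factor $\Delta(y_0)$ cancels exactly between the Jacobian $\mathrm{Det}(g)^2 = \Delta^{2n/r}(y_0)$ coming from $du\,dv$ and the factor $\Delta^{2n/r}(gv') = \Delta^{2n/r}(y_0)\Delta^{2n/r}(v')$ coming from the weight, so that the right-hand side is genuinely invariant. Once this invariance is in place, both (i) and (ii) reduce to their model versions at the basepoint $i\mathbf{e}$, which follow from classical one-complex-variable estimates after applying Lemmas 2.2 and 2.4.
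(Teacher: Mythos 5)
The paper does not prove this lemma itself; it simply cites \cite{BBGNPR}. Your proposal reconstructs the standard argument from that reference (invariance reduction to the basepoint $i\mathbf{e}$, sub-mean value inequality for $|F|^p$, then a Cauchy gradient estimate integrated along a short path), and it is essentially correct. One point to tighten: for assertion (ii) you invoke ``a Euclidean polydisc of fixed small radius around any $w\in \mathbf{B}_\delta(z)$'' and ``the equivalence of the Bergman and Euclidean metrics on $\overline{\mathbf{B}_1(z)}$'', but both the admissible Euclidean radius and the metric-equivalence constants depend on $z$ when $z$ approaches the boundary of $T_\Omega$; as in (i), you must first transport $z$ to $i\mathbf{e}$ by the affine automorphism $\Phi$, after which a fixed $\epsilon_0$-clearance of $\mathbf{B}_{1/2}(i\mathbf{e})$ inside $\mathbf{B}_1(i\mathbf{e})$ and a single set of comparison constants suffice. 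Also note that the region used at the basepoint is a product of two Euclidean balls in $\mathbb{R}^n\times\mathbb{R}^n$ rather than a polydisc, but since it contains and is contained in $\mathbb{C}^n$-balls of comparable radius the sub-mean value argument is unaffected.
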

  

For the second lemma, the reader should refer to \cite{BBGR}, Lemma 4.5.

\begin{lemma}\label{lemma11}
Suppose $\delta\in(0,1)$ and $1\leq p,q<\infty$. There exists a positive constant $C$ such that
\begin{equation}
\|F(\cdot+iy)\|^q_p\leq C\int_{B_\delta(y)}\|F(\cdot+iv)\|^q_p\frac{dv}{\Delta^\frac{n}{r}(v)}
\end{equation}  
for every holomorphic function $F$ on $T_\Omega$ and every $y\in\Omega$.  
\end{lemma}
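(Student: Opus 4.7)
The plan is to show that $\psi(y):=\|F(\cdot+iy)\|_p$ is log-convex along line segments in $\Omega$, so that $\psi^q$ is convex on $\Omega$ for every $q>0$. Then I will use the standard fact that a convex function on a convex open set is dominated at any point by its average over any convex neighborhood symmetric about that point, and convert from a Euclidean ellipsoid to the invariant ball $B_\delta(y)$ via the structural Lemmas 2.1 and 2.3.

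\textbf{Log-convexity via Hadamard three lines.} Fix $y_0,y_1\in\Omega$ and set $y_\alpha:=(1-\alpha)y_0+\alpha y_1$. For each $x\in V$ the map $\zeta\mapsto F(x+iy_0+i\zeta(y_1-y_0))$ is holomorphic in the strip $S=\{0<\mathrm{Re}\,\zeta<1\}$, since $\Omega$ is convex. For $f\in L^{p'}(V)$ with $\|f\|_{p'}=1$, the scalar function
$$G_f(\zeta):=\int_V F(x+iy_0+i\zeta(y_1-y_0))\,f(x)\,dx$$
is holomorphic on $S$; translation invariance of Lebesgue measure on $V$ yields $|G_f(j+i\beta)|\leq\psi(y_j)$ for $j=0,1$ and $\beta\in\R$. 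Hadamard's three lines theorem then gives
$$\log|G_f(\alpha)|\leq(1-\alpha)\log\psi(y_0)+\alpha\log\psi(y_1),\qquad \alpha\in[0,1],$$
and passing to the supremum over $f$ of unit $L^{p'}$ norm yields the same bound for $\log\psi(y_\alpha)$. Hence $\psi$ is log-convex along segments in $\Omega$, so $\psi^q$ is convex on $\Omega$ for every $q>0$.

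\textbf{From convex to sub-mean over $B_\delta(y)$.} Fix $y\in\Omega$ and choose $g\in T$ with $g\cdot\mathbf{e}=y$. By Lemma 2.3, $B_\delta(y)=gB_\delta(\mathbf{e})$ contains the ellipsoid $E:=\{v\in V:\|g^{-1}(v-y)\|<\eta_2\delta\}$, which is symmetric about $y$ and has Euclidean volume $|E|_{\mathrm{eucl}}=C\,\Delta^{\frac{n}{r}}(y)\delta^n$ by formula (2.1). Since $\psi^q$ is convex on $\Omega$, the pairwise inequality $\psi^q(y)\leq\frac{1}{2}(\psi^q(v)+\psi^q(2y-v))$ integrated over $v\in E$ gives
$$\psi(y)^q\leq\frac{1}{|E|_{\mathrm{eucl}}}\int_E\psi(v)^q\,dv\leq\frac{1}{|E|_{\mathrm{eucl}}}\int_{B_\delta(y)}\psi(v)^q\,dv.$$
On $B_\delta(y)$, Lemma 2.1 yields $\Delta^{\frac{n}{r}}(v)\sim\Delta^{\frac{n}{r}}(y)$, so $dv\leq C\,\Delta^{\frac{n}{r}}(y)\,\frac{dv}{\Delta^{\frac{n}{r}}(v)}$; substitution cancels the factor $\Delta^{\frac{n}{r}}(y)$ and produces the desired bound with a constant depending only on $\delta$ and $\Omega$.

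\textbf{Main obstacle.} The principal technical point is the application of Hadamard's three lines theorem to $G_f$, which requires a modest growth condition in the closed strip. This reduces to the local boundedness of $\psi$ on the compact segment $[y_0,y_1]\subset\Omega$, itself a consequence of Lemma 2.10(i) applied on a fixed compact neighborhood of $[y_0,y_1]$. Apart from this verification, the argument is routine convex analysis combined with the cone geometry developed in Section 2.
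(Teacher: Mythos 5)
Your strategy is appealing and, modulo one real gap, would give the lemma; but it takes a genuinely different route from the paper's, which simply cites Lemma 4.5 of the reference \cite{BBGR}. The proof there (and the natural elementary proof) proceeds by the subharmonicity of $|F|$ on a Euclidean ball, Minkowski's integral inequality to pass to the slice norm $\psi(y)=\|F(\cdot+iy)\|_p$, Jensen's inequality for $q\ge 1$, and a change of variables under $g\in T$ (using the $G$-invariance of $T_\Omega$) to convert the Euclidean ball into the anisotropic ellipsoid $\{v:\|g^{-1}(v-y)\|<\rho\}$; this avoids complex-analytic machinery entirely. Your log-convexity argument is a distinct and conceptually clean alternative: the ellipsoid sub-mean-value step and the conversion to the invariant measure via Lemma 2.1 and formula (2.1) are correct as written. (One minor misattribution: the inclusion $E\subset B_\delta(y)$ follows from Lemma 2.2, not from Lemma 2.3, by applying $g$ to the inclusion $\{|\xi-\mathbf e|<\eta_2\delta\}\subset B_\delta(\mathbf e)$.)

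The genuine gap is in the verification of the hypotheses of the Hadamard three-lines theorem, which you relegate to the ``Main obstacle'' paragraph. You need each $G_f$ to be bounded on the closed strip, hence $\psi$ bounded on the compact segment $[y_0,y_1]$, and you claim this follows from the interior mean-value estimate (Lemma 3.1(i) in this paper's numbering). That claim is false. Lemma 3.1(i) bounds $|F(z)|^p$ pointwise by a local average of $|F|^p$ over a relatively compact Bergman ball; integrating in $x$ gives local boundedness of $F$ on compact subsets of $T_\Omega$, but says nothing about $\psi(y)=\|F(\cdot+iy)\|_{L^p(V,dx)}$, whose defining integral is over the non-compact fiber $V=\mathbb R^n$. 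Indeed $\psi$ may be identically $+\infty$ for a general holomorphic $F$ on $T_\Omega$, and no interior estimate alone can rule this out, so the reduction you describe is circular. One can repair the argument --- for instance by observing that the lemma is vacuous when its right-hand side is infinite, integrating Lemma 3.1(i) in $x$ together with Lemma 2.4 and Fubini to handle the case $q=p$ and bootstrap local finiteness of $\psi$, or by appealing to Hardy's convexity theorem for tubes with a suitable regularization of $F$ --- but as written the step is incomplete, and the more elementary subharmonicity--Minkowski--Jensen route is preferable precisely because it never needs any a priori control on $\psi$.
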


The following is our sampling theorem.
\begin{thm}\label{sampling}
Let $\delta \in (0, 1)$ satisfy the assumption of Corollary 2.3 and let $\{z_{l,j}=x_{l,j}+iy_j\}_{l\in\Z, \hskip 1truemm j\in\N}$ be a $\delta$-lattice in $T_\Omega$. Let $1\leq p,q<\infty$ and let  $\textbf s \in \mathbb R^r$ be such that $s_k > \frac {n_k}2, \hskip 2truemm k=1,\cdots,r.$  There exists a positive constant $C_\delta = C_\delta (\s, p, q)$ such that for every $F\in A_\s^{p,q}$, we have
\begin{equation}
\sum_j\left(\sum_l|F(z_{l,j})|^p\right)^\frac{q}{p}\Delta_{\s+\frac{nq}{rp}}(y_j)\leq C_\delta\|F\|_{A_\s^{p,q}}^q
\end{equation} 
Moreover, if $\delta$ is small enough, the converse inequality
\begin{equation}\label{eqq}
\|F\|_{A_\s^{p,q}}^q\leq C_\delta \sum_j\left(\sum_l|F(z_{l,j})|^p\right)^\frac{q}{p}\Delta_{\s+\frac{nq}{rp}}(y_j)
\end{equation}
is also valid.
\end{thm}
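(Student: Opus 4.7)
The plan is to prove the direct inequality for any $\delta\in(0,1)$ via a sub-mean-value argument combined with bounded overlap, and the converse inequality \eqref{eqq} via a Lipschitz approximation whose error has to be absorbed into the left-hand side, which is the step that will force $\delta$ to be small.

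For the direct inequality, I first apply Lemma \ref{lemma10}(i) to bound $|F(z_{l,j})|^p$ by an integral of $|F|^p$ over the Bergman ball $\mathbf{B}_\delta(z_{l,j})$ against the invariant measure $\frac{du\,dv}{\Delta^{2n/r}(v)}$. The inclusion in Lemma \ref{inclusion} enlarges this ball to the box $\{x:\|g_j^{-1}(x-x_{l,j})\|<R\delta\}\times B_{R\delta}(y_j)$, on which $\Delta$ is comparable to $\Delta(y_j)$ by Lemma 2.1. Bounded overlap of the dilated Whitney tiles (the same argument as in the proof of Lemma \ref{lemma8}(iii)) then yields
$$\sum_l|F(z_{l,j})|^p\leq C_\delta\,\Delta^{-2n/r}(y_j)\int_{B_{R\delta}(y_j)}\|F(\cdot+iv)\|_p^p\,dv.$$
To take the $q/p$-th power uniformly in the two cases $q\geq p$ and $q<p$, I raise Lemma \ref{lemma11} to its $p/q$-th power and obtain the pointwise bound
$$\|F(\cdot+iv)\|_p^p\leq C\Bigl(\int_{B_{(R+1)\delta}(y_j)}\|F(\cdot+iw)\|_p^q\,\tfrac{dw}{\Delta^{n/r}(w)}\Bigr)^{p/q}$$
valid for all $v\in B_{R\delta}(y_j)$. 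Substituting and multiplying by $\Delta_{\s+\frac{nq}{rp}}(y_j)=\Delta_\s(y_j)\Delta^{nq/(rp)}(y_j)$ makes the various powers of $\Delta$ telescope to $\Delta_{\s-n/r}$, and bounded overlap of $\{B_{(R+1)\delta}(y_j)\}_j$ (a consequence of Lemma \ref{lemma7}(iii)) then collapses the sum into $\|F\|_{A_\s^{p,q}}^q$.

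For the converse, I cover $T_\Omega$ by the tiles $T_{l,j}:=I_{l,j}+iB_\delta(y_j)$, on which $d(u+iv,z_{l,j})\leq C\delta$ by Lemma \ref{inclusion}. The Lipschitz estimate of Lemma \ref{lemma10}(ii) yields
$$|F(u+iv)|^p\leq 2^{p-1}|F(z_{l,j})|^p+C\delta^p\int_{\mathbf{B}_2(z_{l,j})}|F|^p\,\tfrac{du'\,dv'}{\Delta^{2n/r}(v')}.$$
Integrating in $u\in I_{l,j}$ (using $|I_{l,j}|\leq C\Delta^{n/r}(y_j)$ from Lemma 2.10(a)), summing in $l$ via the covering $\sum_l\chi_{I_{l,j}}\geq 1$ of Lemma \ref{lemma8}(i), raising to $q/p$, integrating in $v\in B_\delta(y_j)$ against $\Delta_{\s-n/r}(v)\,dv$ (with all weights comparable to their values at $y_j$), and summing over $j$ produces an estimate of the form
$$\|F\|_{A_\s^{p,q}}^q\leq C_1\sum_j\Delta_{\s+\frac{nq}{rp}}(y_j)\Bigl(\sum_l|F(z_{l,j})|^p\Bigr)^{q/p}+C_2\,\delta^q\,E,$$
where $E$ is the error generated by the Lipschitz correction.

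The main obstacle is to show $E\leq C\|F\|_{A_\s^{p,q}}^q$, since only then can the $C_2\delta^q E$ term be absorbed into the left-hand side by choosing $\delta$ so small that $C_2\delta^q<\tfrac12$. To handle $E$, I use bounded overlap of the fixed-radius Bergman balls $\{\mathbf{B}_2(z_{l,j})\}_l$ at fixed $j$ to rewrite $\sum_l\int_{\mathbf{B}_2(z_{l,j})}|F|^p\,d\mu$ as an integral of $\|F(\cdot+iv)\|_p^p$ over a bounded-radius neighborhood of $y_j$ in $\Omega$, and then apply the same Lemma \ref{lemma11}-to-$p/q$ trick to dominate this by invariant-measure averages of $\|F(\cdot+i\cdot)\|_p^q$. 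After the same $\Delta$-weight telescoping and bounded overlap of the enlarged balls in $\Omega$, these averages sum to $\|F\|_{A_\s^{p,q}}^q$, finishing the absorption and proving \eqref{eqq}.
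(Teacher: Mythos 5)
Your proof of the direct inequality is sound and close in spirit to the paper's, differing only in cosmetic ways (you invoke Lemma 3.2 raised to the $p/q$-power pointwise, whereas the paper uses the monotonicity of $v\mapsto\|F(\cdot+iv)\|_p$ together with the dilation $F_\gamma(z)=F(\gamma z)$; you use bounded overlap of the $\delta$-scaled tiles $\{\|g_j^{-1}(x-x_{l,j})\|<R\delta\}$, whereas the paper uses disjointness of the $I'_{l,j}$). Both of these are fine because the dilated tiles still have radius comparable to the separation $\delta/R$, so their overlap constant is genuinely independent of $\delta$.

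The gap is in your treatment of the converse. You claim to ``use bounded overlap of the fixed-radius Bergman balls $\{\mathbf B_2(z_{l,j})\}_l$ at fixed $j$'' to rewrite $\sum_l\int_{\mathbf B_2(z_{l,j})}|F|^p\,d\mu$ as a single integral. This overlap is \emph{not} bounded: at fixed $j$ the centers $x_{l,j}$ are only $\delta/R$-separated in the $g_j^{-1}$-normalized metric, while the balls $\mathbf B_2(z_{l,j})$ project to sets of fixed $g_j^{-1}$-radius $\asymp 1$, so a given point lies in roughly $\delta^{-n}$ of them. Your argument therefore produces an error term $E$ carrying a hidden factor $\delta^{-n}$. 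This factor is in fact cancelled by the factor $\delta^n$ hidden in $|I_{l,j}|\asymp \delta^n\Delta^{n/r}(y_j)$ (the constant $C$ in Lemma 2.11(a) is $C(\delta,R)$, not absolute), but you quote that lemma as simply $|I_{l,j}|\leq C\Delta^{n/r}(y_j)$ and do not invoke the cancellation, so as written the estimate $E\lesssim\|F\|^q_{A_\s^{p,q}}$ is unjustified and the absorption step would fail. The paper sidesteps this entirely by centering the Lipschitz ball at the point of integration $x+iy$ rather than at $z_{l,j}$, then applying Fubini so that the relevant quantity becomes $K_j(w)=\int_{\mathbb R^n}\sum_l\chi_{\{x\in I_{l,j}:\,d(x+iy,w)<1\}}\,dx$, which is bounded by $C\Delta^{n/r}(y_j)$ with $C$ independent of $\delta$ (Lemma 2.11(b)); the only $\delta$-dependence in the error is then the clean factor $\delta^p$ from Lemma 3.1(ii). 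To repair your version you would either need to track the $\delta^n\cdot\delta^{-n}$ cancellation explicitly, or switch to the paper's Fubini/$K_j$ device.
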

\begin{proof}
From Lemma \ref{lemma10} we have 
\begin{equation}\label{equation5}
|F(z_{l,j})|^{p}\leq C\delta
^{-2n}\int_{{\textbf B}_ \frac{\delta}{2R^2}(z_{l,j})}|F(u+iv)|^p\,\frac{du\,dv}{\Delta^{\frac{2n}{r}}(v)}.
\end{equation}
It follows from the inclusion $\textbf{B}_ \frac{\delta}{2R^2}(z_{l,j})\subset\left\{u+iv:\,\,u\in I'_{l,j},\,\,v\in B_\frac{\delta}{2R}(y_j)\right\}$  that
\begin{equation}\label{equation6}
|F(z_{l,j})|^{p}\leq C\delta ^{-2n}\int _{I'_{l,j} }\,du\int_{ B_\frac{\delta}{2R}(y_j)
}|F(u+iv)|^p\,\frac{dv}{\Delta^\frac{2n}{r}(v)}.
\end{equation}
From the equivalence of $\Delta(v)$ and $\Delta(y_j)$ whenever $v\in B_\frac{\delta}{2R}(y_j),$ we obtain that 
\begin{equation}\label{equation6'}
|F(z_{l,j})|^{p}\leq \frac{C\delta ^{-2n}}{\Delta^\frac{2n}{r}(y_j)}\int _{I'_{l,j} }\,du\int_{ B_\frac{\delta}{2R}(y_j)
}|F(u+iv)|^p\,dv.
\end{equation}
Next, a successive application of Lemma \ref{lemma8}, Corollary 2.3 and the non-increasing property of the function $\Omega\ni v\mapsto\|F(\cdot+iv)\|_p^p$  gives the existence of a positive constant $\gamma$ such that
\begin{align*}
\sum \limits_{l\in L_j}|F(z_{l,j})|^{p}&\leq \frac{C\delta ^{-2n}}{\Delta^\frac{2n}{r}(y_j)}\int _{\R^n }\,du\int_{ B_\frac{\delta}{2R}(y_j)
}|F(u+iv)|^p\,dv\\
&=\frac{C\delta ^{-2n}}{\Delta^\frac{2n}{r}(y_j)}\int_{ B_\frac{\delta}{2R}(y_j)
}\|F(\cdot+iv)\|_p^p\,dv\\
&\leq \frac{C\delta ^{-2n}}{\Delta^\frac{2n}{r}(y_j)}\int_{ B_\frac{\delta}{2R}(y_j)
}\|F(\cdot+i\gamma y_j)\|_p^p\,dv\\
&\leq \frac{C\delta ^{-2n}}{\Delta^\frac{n}{r}(y_j)}\|F(\cdot+i\gamma y_j)\|_p^p.
\end{align*}
Finally,  we obtain
\begin{equation}\label{equation7}
\sum_j\left(\sum_l|F(z_{l,j})|^p\right)^\frac{q}{p}\Delta_{\s+\frac{nq}{rp}}(y_j)\leq C_\delta^{\frac qp} \sum_j\|F(\cdot+i\gamma y_j)\|_p^q\Delta_\s (y_j).
\end{equation} 
We define the holomorphic function $F_\gamma$ by
$$F_\gamma(x+iy)=F(\gamma(x+iy)).$$
By Lemma \ref{lemma11}, we get 
\begin{equation}\label{equation8}
\|F(\cdot+i\gamma y_j)\|_p^q =\gamma^{\frac {nq}p}\|F_{\gamma}(\cdot +iy_{j})\|_{p}^{q}\leq C\gamma^{\frac {nq}p}\int
  _{ B_\frac{\delta}{2R^2}(y_j) }\|F_{\gamma}(\cdot
  +iy)\|_{p}^{q}\frac{dy}{\Delta^{\frac{n}{r}}(y)}.
\end{equation}
It follows from (\ref{equation8}), Lemma \ref{lemma7} and the equivalence of $\Delta(y)$ and $\Delta(y_j)$ whenever $y\in B_\frac{\delta}{2R^2}(y_j)$  that
\begin{eqnarray*}
\underset{j}{\sum}\|F (\cdot
  +i\gamma y_{j})\|_{p}^{q}\Delta_{\s}(y_{j})&\leq C\gamma^{\frac {nq}p}\int _{\Omega}\|F_\gamma (\cdot +iy)
  \|_{p}^{q}\Delta_{\s -\frac{n}{r}}(y)\,dy\\
&=C\int_\Omega ||F(\cdot +i\gamma y)||_p^q \Delta_{\s -\frac nr} (y)dy.
\end{eqnarray*}
Moreover, taking $v=\gamma y$ we obtain
\begin{equation}\label{equation9}
\underset{j}{\sum}\|F(\cdot
  +i\gamma y_{j})\|_{p}^{q} \Delta_{\s}(y_{j}) \leq C(\gamma, \s, p, q)\int _{\Omega}\|F(\cdot
  +iv)\|_{p}^{q}\Delta_{\s -\frac{n}{r}}(v)\,dv.
\end{equation}
So the estimate (3.2) is a direct consequence of {(\ref{equation7})} and {(\ref{equation9})}.

Conversely, a successive application of Lemma \ref{lemma8}, the triangle inequality and assertion a) of Lemma 2.11 gives
\begin{eqnarray*}
\|F(\cdot+iy)\|_p^p &\leq C_p\left \{\underset{l\in L_j}{\sum}\int
    _{I_{l,j}}|F(x+iy)-F(z_{l,j})|^{p}\,dx+
    \underset{l\in L_j}{\sum}|F(z_{l,j})|^p\int_{I_{l,j}}\,dx\right \}\\
&\leq C_p\left \{\underset{l\in L_j}{\sum}\int
    _{I_{l,j}}|F(x+iy)-F(z_{l,j})|^{p}\,dx+
    \underset{l\in L_j}{\sum}|F(z_{l,j})|^p\Delta^{\frac nr}(y_j)\right \}.
\end{eqnarray*}
for all $y\in \Omega$.
In the sequel, for fixed $y \in \Omega,$ we set
$$K_j(w)=\int_{\R^n}\sum_{l\in L_j}\chi_{\{x\in I_{l,j}:\,\,d(x+iy,w)<1\}}(x)dx$$
and we write
$$
N_{p,q}(F) = \int_{y\in\Omega}\underset{j\in\mathbb{N}}{\sum}\chi_{ B_\delta(y_j)}(y)\times$$
$$\left(\int_{v\in\Omega}\int_{\mathbb{R}^{n}}
K_j(u+iv) |F(u+iv)|^{p}\chi_{d_\Omega (y, v) < R}\frac{du\,dv}{\Delta (v)^{\frac{2n}{r}}}
\right)^{\frac{q}{p}}\Delta_{\s-\frac{n}{r}}(y)dy.
$$
Using assertion (ii) of Lemma 3.1, we obtain easily that
\begin{align*}
\|F\|_{A_{\s}^{p,q}}^{q}&\leq \int_{\underset{j}{\cup}B_\delta(y_j)}\|F(\cdot
+iy)\|_{p}^{q}\Delta _{\s -\frac{n}{r}}(y)dy\\
&\leq C_{p,q}\delta
^{q} N_{p,q}(F)+C_{p,q}\underset{j} {\sum} \left(
\underset{l}{\sum}|F(z_{l,j})|^{p}\right)^{\frac{q}{p}}\Delta _{
{\s}+\frac{nq}{rp}}(y_{j}).
\end{align*}
To prove (\ref{eqq}) it suffices to establish the following inequality:
$$N_{p,q}(F)\leq C \|F\|_{A_{\s}^{p,q}}^{q}.$$
To this end, first observe that by assertion (b) of Lemma 2.11, we have
$$K_j(w)  \leq C\Delta^{\frac nr} (y_j), \quad \forall y\in B_\delta (y_j), \hskip 2truemm \forall w\in \Omega.$$
Now by Lemma 2.1, we have the equivalence $\Delta(v)\sim\Delta(y_j)\sim \Delta(y)$ whenever $v\in B_R (y)$ and $y\in B_\delta (y_j)$ with equivalence constants independent of $\delta.$ This combined with an  application of
assertion (iii) of Lemma \ref{lemma7}  gives that 
$$
N_{p,q}(F)\leq
CN\int_\Omega
\left(\int_{d(v,y)<R}\|F(\cdot+iv)\|_p^p\frac{dv}{\Delta^\frac{n}{r}(v)}\right)^\frac{q}{p}\Delta_{\s-\frac{n}{r}}(y)dy.
$$ 
 Next, from the non-increasing property of the mapping $v\in\Omega\mapsto\|F(\cdot+iv)\|_p,$ Corollary \ref{lemma5} and the $G$-invariance of the measure $\frac {dv}{\Delta^{\frac nr} (v)}$ on $\Omega,$ there exists a positive constant $\gamma$ independent of $\delta$ such that
\begin{equation*}
N_{p,q}(F)\leq CN\int_\Omega\|F(\cdot+i\gamma y)\|_p^q  \Delta_{\s-\frac{n}{r}}(y)dy.
\end{equation*}
Finally, taking $t=\gamma y$ on the right hand side of the previous inequality, we obtain that
$$N_{p,q}(F)\leq C (\gamma)\|F\|_{A_{\s}^{p,q}}^{q}.$$
\end{proof}

\subsection{Proof of Theorem A}
We can now prove the atomic decomposition theorem (Theorem A). Here is its more precise statement.

\begin{thm}\label{thA'}
Let $\delta \in (0, 1)$ and let $\{z_{l,j}=x_{l,j}+iy_j\}_{l\in\Z, \hskip 1truemmj\in\N}$ be a $\delta$-lattice in $T_\Omega.$ Let  $\textbf s$ be a vector of $\R^r$ such that $s_k > \frac {n_k}2, \hskip 1truemm k=1,\cdots,r.$ Assume that $P_\s$ extends to a bounded operator on $L_\textbf s^{p,q}$. Then there exists a positive constant $C$ such that the following two assertions hold.
\begin{enumerate}[\upshape (i)]
\item For every sequence $\{\lambda_{l,j}\}_{l\in\Z,  \hskip 1truemm j\in\N}$ such that $$\sum_j\left(\sum_l|\lambda_{l,j}|^p\right)^\frac{q}{p}\Delta_{\textbf s+\frac{nq}{rp}}(y_j)<\infty,$$ the series $$\sum_{l,j}\lambda_{l,j}\Delta_{\textbf s+\frac{nq}{rp}}(y_j)B_\s(z,z_{l,j})$$ is convergent in $A_\textbf s^{p,q}$. Moreover, its sum $F$ satisfies the inequality
$$\|F\|_{A_\textbf s^{p,q}}^q\leq C_\delta \sum_j\left(\sum_l|\lambda_{l,j}|^p\right)^\frac{q}{p}\Delta_{\textbf s+\frac{nq}{rp}}(y_j)$$
\item For $\delta$ small enough, every function $F\in A_\textbf s^{p,q}$ may be written as 
$$F(z)=\sum_{l,j}\lambda_{l,j}\Delta_{\textbf s+\frac{nq}{rp}}(y_j)B_\textbf s (z,z_{l,j}),$$ 
with $$\sum_j\left(\sum_l|\lambda_{l,j}|^p\right)^\frac{q}{p}\Delta_{\textbf s+\frac{nq}{rp}}(y_j)\leq C_\delta \|F\|_{A_\textbf s^{p,q}}^q$$
\end{enumerate} 
\end{thm}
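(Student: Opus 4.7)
My plan is to prove both parts by discretizing the reproducing formula for $P_\s$ along the $\delta$-lattice $\{z_{l,j}\}$ and invoking the assumed boundedness of $P_\s$ on $L_\s^{p,q}$. Parts (i) and (ii) are linked: the synthesis bound (i) will be the tool for verifying boundedness of the discretization operator used in (ii), while (ii) uses part (i) in a Neumann-series inversion step.

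\medskip

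\noindent\textbf{Part (i).} The strategy is to realize the proposed atomic series as $P_\s\widetilde F$ for an auxiliary $\widetilde F\in L_\s^{p,q}$ with
\[
\|\widetilde F\|_{L_\s^{p,q}}^q\lesssim \sum_j\Bigl(\sum_l|\lambda_{l,j}|^p\Bigr)^{q/p}\Delta_{\s+nq/(rp)}(y_j),
\]
so that the bound on $\|F\|_{A_\s^{p,q}}$ follows from $\|P_\s\widetilde F\|_{A_\s^{p,q}}\leq \|P_\s\|\,\|\widetilde F\|_{L_\s^{p,q}}$. To construct $\widetilde F$, I first fix a disjoint partition $\{E_{l,j}\}$ of $T_\Omega$ subordinate to the cover $\{I_{l,j}+iB_{\delta/R}(y_j)\}_{l,j}$ of Remark \ref{remark1}; this is possible by the bounded-multiplicity assertion (iii) of Lemma \ref{lemma8}. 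On each cell $E_{l,j}$ the kernel $B_\s(z,\cdot)$ is essentially constant by Lemma \ref{lemma10}(ii), so taking $\widetilde F$ to be an appropriate step function adapted to $\{E_{l,j}\}$ makes the reproducing property deliver the series modulo a small error. The $L_\s^{p,q}$-norm of $\widetilde F$ is then evaluated slice-by-slice in $y$, using disjointness of $\{E_{l,j}\}$, the Lebesgue-measure bound $|I_{l,j}|\lesssim \Delta^{n/r}(y_j)$, and the comparability of $\Delta$-weights on Bergman balls of radius less than $1$. Convergence of the full series follows from the norm bound applied to finite partial sums together with Cauchy completeness.

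\medskip

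\noindent\textbf{Part (ii).} Introduce the discretization operator
\[
\mathcal{T} F(z):=\sum_{l,j}F(z_{l,j})\,\Delta_{\s+nq/(rp)}(y_j)\,B_\s(z,z_{l,j}).
\]
Boundedness of $\mathcal T$ on $A_\s^{p,q}$ follows by applying part (i) with $\lambda_{l,j}=F(z_{l,j})$ together with the sampling theorem (Theorem \ref{sampling}). The decisive step is to show $\|I-\mathcal T\|_{A_\s^{p,q}\to A_\s^{p,q}}\to 0$ as $\delta\to 0$: writing $F=P_\s F$ (valid since $P_\s$ extends boundedly on $L_\s^{p,q}$) and comparing with $\mathcal T F$ cell-by-cell over $\{E_{l,j}\}$, Lemma \ref{lemma10}(ii) gives an $O(\delta)$ Lipschitz-type bound on each cell-level integrand; the boundedness of $P_\s$ then converts this pointwise smallness into the operator-norm estimate. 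For $\delta$ sufficiently small, $\mathcal T$ becomes invertible by the Neumann series, and $F=\mathcal T(\mathcal T^{-1}F)$ exhibits $F$ in the required form with coefficients $\lambda_{l,j}=(\mathcal T^{-1}F)(z_{l,j})$. The summability bound on $\lambda$ follows from Theorem \ref{sampling} applied to $\mathcal T^{-1}F$ together with the uniform bound $\|\mathcal T^{-1}\|\lesssim 1$.

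\medskip

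\noindent\textbf{Main obstacle.} The hardest estimate is the operator-norm bound $\|I-\mathcal T\|_{A_\s^{p,q}\to A_\s^{p,q}}=o(1)$ as $\delta\to 0$. This requires interlacing three ingredients: the Lipschitz control of $B_\s(z,\cdot)F(\cdot)$ across a small Bergman ball (Lemma \ref{lemma10}(ii)), the mixed-norm kernel bounds implicit in the hypothesis that $P_\s$ is bounded on $L_\s^{p,q}$, and the weight arithmetic relating $\int_{E_{l,j}}\Delta_{\s-n/r}(v)du\,dv$ to the factor $\Delta_{\s+nq/(rp)}(y_j)$ appearing in the atoms. Aligning these with the mixed exponent $nq/(rp)$—rather than the more symmetric $n/r$ of the diagonal case $p=q$—requires careful calibration of both the cells $E_{l,j}$ and the step-function coefficients defining $\widetilde F$.
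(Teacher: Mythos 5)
Your approach is genuinely different from the paper's, and the difference is worth spelling out. The paper proves Theorem~3.4 by a pure duality argument: the synthesis map is \emph{defined} as the adjoint $R^*$ of the sampling operator $R\colon F\mapsto\{F(z_{l,j})\}$, acting between $A_{\s}^{p',q'}$ and the sequence space $l_{\s}^{p',q'}$ (with the dual identification $(A_{\s}^{p',q'})' = A_{\s}^{p,q}$ borrowed from Debertol, which is where the hypothesis on $P_\s$ is used). Part~(i) is then the bare statement that the adjoint of a bounded operator is bounded, with the first half of the sampling theorem supplying boundedness of $R$; Part~(ii) is the statement that $R^*$ is onto because $R$ is bounded below (the second half of the sampling theorem), followed by the open mapping theorem for the induced quotient map $l_{\s}^{p,q}/\mathcal N\to A_{\s}^{p,q}$. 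No cells $E_{l,j}$, no step functions, no Neumann series, and in particular no $\delta\to 0$ operator-norm estimate is needed; the weight powers you worry about in your final paragraph fall out automatically from the choice of pairing $\langle\lambda,\mu\rangle=\sum_{l,j}\lambda_{l,j}\overline{\mu}_{l,j}\Delta_{\s+n/r}(y_j)$ and H\"older's inequality.

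Your version is the classical Coifman--Rochberg construction, and as an alternative strategy it is legitimate, but as written there is a real gap in Part~(i). You set $F=P_\s\widetilde F$ and then bound $\|P_\s\widetilde F\|\le\|P_\s\|\,\|\widetilde F\|_{L_\s^{p,q}}$; but a step function $\widetilde F$ supported on cells never reproduces the discrete kernel sum $\sum_{l,j}\lambda_{l,j}\Delta_{\s+nq/(rp)}(y_j)B_\s(\cdot,z_{l,j})$ exactly. What you actually get is $F = P_\s\widetilde F + E$, and your write-up acknowledges $E$ (``modulo a small error'') without estimating it. Controlling $\|E\|_{A_\s^{p,q}}$ in terms of the sequence norm is exactly the cell-level Lipschitz/Schur estimate that you later need for $\|I-\mathcal T\|\to 0$ in Part~(ii). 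So as structured, Part~(i) is not self-contained: you are using the hard estimate before you prove it, while Part~(ii) in turn uses Part~(i) for boundedness of $\mathcal T$. To close this you must either (a) prove Part~(i) by an independent direct Schur-test estimate on the kernel sum, or (b) prove the operator-norm estimate first and let both (i) and (ii) follow, or (c) adopt the paper's duality route, under which Part~(i) costs nothing once the sampling inequality is in hand.

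Two smaller remarks. First, your description of the sequence-space pairing is implicit but necessary to make ``the adjoint $R^*$'' well defined with the advertised weight; the pairing must use the weight $\Delta_{\s+n/r}(y_j)$, not $\Delta_{\s+nq/(rp)}(y_j)$, and checking that this pairing implements the duality $l_{\s}^{p,q}=(l_{\s}^{p',q'})'$ is a short but nonvacuous computation. Second, your observation that aligning the weight $nq/(rp)$ across the cells is delicate is precisely the kind of bookkeeping the duality argument buys you out of; that is the main pragmatic advantage of the paper's proof over yours.
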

 
\begin{proof}[Proof of Theorem \ref{thA'}]
 Let $p\in [1, \infty], \hskip 2truemm q\in(1,\infty), $ and call $p'$ and $q'$ their conjugate exponents, i.e  $\frac{1}{p}+\frac{1}{p'}=1$ and $\frac{1}{q}+\frac{1}{q'}=1$. Let $\s \in \mathbb R^r$ such that $s_k > \frac {n_k}2, \hskip 2truemm k=1,\cdots, r.$ Recall that (cf. \cite{DD}) if $P_\s:L^{p',q'}_\s\to A^{p',q'}_\s$ is bounded, then the dual space of $A^{p',q'}_\s$ identifies with $A^{p,q}_\s$ with respect to the pairing
$$<F, G>_{\textbf s} = \int_{T_\Omega} F(x+iy)\overline {G(x+iy)}\Delta_{\textbf s - \frac nr} (y)dxdy.$$
Denote by $l^{p,q}_\s$ the space of complex sequences $\{\lambda_{l,j}\}_{l\in\Z, \hskip 1truemmj\in\N}$ such that
$$||\{\lambda_{l,j}\}||_{l^{p,q}_\s} = \left (\sum_j \left (\sum_l|\lambda_{l,j}|^p\right )^\frac{q}{p}\Delta_{\s+\frac{nq}{rp}}(y_j)\right )^{\frac 1q}<\infty.$$
We have the duality $l^{p,q}_\s =(l_{\s}^{p' ,q' })'$ with respect to the pairing
$$<\lambda,\,\,\mu>_{l_{\s}^{p',q'},\,\,l_{\s}^{p,q}}
=\underset{l,j}{\sum}\lambda_{l,j}
\overline{\mu}_{l,j}\Delta_{\s+\frac{n}{r}}(y_j).$$
Then from the first part of the sampling theorem,  the operator 
\begin{eqnarray*}
R:&A_{\s}^{p',q' }&\to l_{\s}^{p' ,q' }\\
&F&\mapsto  RF=\{F(z_{l,j})\}_{l\in\Z,\,j\in\mathbb{N}}
\end{eqnarray*}
is bounded. So the adjoint operator $R^*$ of $R$ is also a bounded operator  from $l^{p,q}_\s$ to $A^{p,q}_\s$. Its explicit formula is 
 $$R^*(\{\lambda _{l,j}\})(z)=\sum _{l,j}\lambda
_{l,j}\Delta_{\s +\frac{n}{r}} (y_{j})B_{\s}(z, z_{l,j}).$$
This completes the proof of assertion (i).

From the second part of the sampling theorem, if $\delta$ is small enough, the adjoint operator $R^*:l^{p,q}_\nu\to A^{p,q}_\nu$ of $R$ is onto. Moreover,  we call $\mathcal{N}$ the subspace of $l^{p,q}_\nu$ consisting of all sequences $\{\lambda_{l,j}\}_{l\in\Z,\,j\in\mathbb{N}}$ such that the mapping $$z\mapsto\sum_{l,j}\lambda_{l,j} \Delta_{\s+\frac{n}{r}}(y_j)B_\s (z,z_{l,j})$$
vanishes identically. Then the linear operator
\begin{eqnarray*}
\varphi:l_{\textbf s}^{p,q}/\mathcal{N}&\to& A_{\textbf s}^{p,q}\\
\{\lambda _{l,j}\}&\mapsto &\sum _{l,j}\lambda
_{l,j}B_{\nu}(z,z_{l,j})\Delta_{\s +\frac{n}{r}} (y_{j})
\end{eqnarray*}   
is a bounded isomorphism from the Banach quotient space $l_{\nu}^{p,q}/\mathcal{N}$ to $A_{\textbf s}^{p,q}$.  The inverse operator $\varphi^{-1}$ of $\varphi$ is continuous. This gives assertion (ii).
\end{proof}
\section{Interpolation}
In this section we determine the  interpolation space via the complex method between two mixed norm  weighted Bergman spaces. 
\subsection{Interpolation via the complex method between Banach spaces}
Throughout this section we denote by $S$ the open strip in the complex plane defined by
$$S=\{z=x+iy\in\C:\,\,\,0<x<1\}.$$
Its closure $\overline S$ is
$$\overline S=\{z=x+iy\in\C:\,\,\,0\leq x\leq 1\}.$$ 

Let $X_0$ and $X_1$ be two compatible Banach spaces, i.e. they are continuously embedded in a Hausdorff topological space. Then $X_0+X_1$ becomes a Banach space with the norm
$$\|f\|_{X_0+X_1}=\inf \hskip 1truemm \left (\|f_0\|_{X_0}+\|f_1\|_{X_1}\right ),\,\,\,\,f=f_0 +f_1, \hskip 1truemm f_0\in X_0, \hskip 1truemm f\in X_1\}.$$
We will denote by $\mathcal{F}(X_0,X_1)$ the space of analytic mappings 
\begin{eqnarray*}
f:&\overline {S }&\rightarrow X_0+X_1\\
&\zeta &\mapsto f_\zeta
\end{eqnarray*}
 with the following properties:  
\begin{enumerate} [\upshape (1)]    
\item  $f$ is bounded and continuous on  $\overline S;$  
\item $f$ is analytic in $S;$
\item For $k=0, 1$ the function $y\mapsto f_{k+iy}$ is bounded  and continuous from the real line into $X_k$.
\end{enumerate}
The space $\mathcal{F}(X_0,X_1)$ is a Banach space with the following norm:
$$\|f\|_{\mathcal{F}}=\max \hskip 2truemm \left (\sup_{\Re e\hskip 1truemm \zeta =0}\|f_\zeta\|_{X_0},\sup_{\Re e\hskip 1truemm \zeta =1}\|f_\zeta\|_{X_1}\right ).$$
If $\theta\in (0,1)$, the complex interpolation space $[X_0,X_1]_\theta$ is the subspace of $\mathcal{F}(X_0,X_1)$ consisting of holomorphic functions $g$ on $T_\Omega$ such that $f_\theta=g$ for some $f\in \mathcal{F}(X_0,X_1)$. The space $[X_0,X_1]_\theta$ is a Banach space with the following norm:
$$\|g\|_\theta=\inf \{||f||_{\mathcal{F}(X_0,X_1)}: g=f_\theta\}.$$

Referring to \cite{BL} and \cite{SW} (cf. also \cite{Z}), the complex method of interpolation spaces is functorial in the following sense:  if $Y_0$ and $Y_1$ denote two other compatible Banach spaces of measurable functions on $T_\Omega,$ then if
$$T:\,\,X_0+X_1\to Y_0+Y_1$$
is a linear operator with the property that $T$ maps $X_0$ boundedly into $Y_0$ and $T$ maps $X_1$ boundedly into $Y_1$, then $T$ maps $[X_0,X_1]_\theta$ boundedly into $[Y_0,Y_1]_\theta$, for each $\theta\in (0,1)$. See \cite{BL} for more information about complex interpolation.\\
A classical example of  interpolation via the complex method concerns $L^{p,q}$ spaces with a change of measures. We state it in our setting of a tube domain $T_\Omega$ over a symmetric cone $\Omega.$

\begin{thm}\label{mixed} \cite{C, SW1}
Let $1\leq p_0, p_1, q_0, q_1 \leq \infty.$ Given two positive measurable functions (weights) $\omega_0, \hskip 2truemm \omega_1$ on $\Omega,$ then for every $\theta \in (0, 1),$ we have
$$[L^{q_0}\left ((\Omega, \omega_0 (y)dy); L^{p_0} (\mathbb R^n, dx)), L^{q_1}((\Omega, \omega_1 (y)dy); L^{p_1} (\mathbb R^n, dx)\right )]_\theta$$
$$=L^{q}((\Omega, \omega (y)dy); L^p (\mathbb R^n, dx))$$
with equal norms, provided that
$$\frac{1}{p}=\frac{1-\theta}{p_0}+\frac{\theta}{p_1}$$
$$\frac{1}{q}=\frac{1-\theta}{q_0}+\frac{\theta}{q_1}$$
$$\omega^\frac{1}{q}=\omega_0^\frac{1-\theta}{q_0}\omega_1^\frac{\theta}{q_1}.$$
\end{thm}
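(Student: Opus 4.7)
This is the classical Calderón change-of-measure interpolation theorem in the mixed norm (vector-valued) setting. Since only the outer measure on $\Omega$ carries a weight and the inner measure on $\R^n$ is fixed Lebesgue, the geometry of the cone plays no role; the argument is purely measure-theoretic, following the templates in \cite{C, SW1}. I would verify the two continuous inclusions $[X_0,X_1]_\theta\hookrightarrow X$ and $X\hookrightarrow [X_0,X_1]_\theta$ with norm $1$, where $X_k=L^{q_k}(\Omega,\omega_k\,dy;L^{p_k}(\R^n,dx))$ and $X$ is the putative limit space.

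For the inclusion $[X_0,X_1]_\theta\hookrightarrow X$, fix $g=f_\theta$ with $f\in\mathcal F(X_0,X_1)$. By duality it suffices to estimate $\langle g,h\rangle=\int g\,\overline h\,dx\,dy$ against a simple function $h$ of norm $1$ in the predual of $X$. For such $h$ I would introduce the scalar analytic function
$$\Phi(\zeta)=\int_{T_\Omega}f_\zeta(x+iy)\,\overline{h(x+iy)}\,\Lambda(\zeta,y)\,dx\,dy,$$
where $\Lambda(\zeta,y)$ is a product of powers of $\omega,\omega_0,\omega_1$ with exponents affine in $\zeta$, chosen so that (i) $\Lambda(\theta,\cdot)\equiv 1$, so that $\Phi(\theta)=\langle g,h\rangle$, and (ii) on each boundary line $\Re\zeta=k$ the integrand factors as the duality pairing of $f_\zeta\in X_k$ with the corresponding predual element built from $h$. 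Hadamard's three lines lemma applied to $\Phi$ then yields $|\Phi(\theta)|\le\|f\|_{\mathcal F}$, and taking the supremum over $h$ gives $\|g\|_X\le\|f\|_{\mathcal F}$.

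For the reverse inclusion, given $g\in X$ with $\|g\|_X=1$, I would construct an analytic family $f_\zeta$ with $f_\theta=g$ via the Calderón template
$$f_\zeta(x+iy)=\operatorname{sgn}\bigl(g(x+iy)\bigr)\,|g(x+iy)|^{p/p(\zeta)}\,\|g(\cdot+iy)\|_p^{\beta(\zeta)}\,\Bigl(\tfrac{\omega(y)}{\omega_0(y)}\Bigr)^{(1-\zeta)/q_0}\Bigl(\tfrac{\omega(y)}{\omega_1(y)}\Bigr)^{\zeta/q_1},$$
where $1/p(\zeta)=(1-\zeta)/p_0+\zeta/p_1$ and $\beta(\zeta)=(1-\zeta)(q/q_0-p/p_0)+\zeta(q/q_1-p/p_1)$ is linear with $\beta(\theta)=0$. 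At $\zeta=\theta$ the weight factor reduces to $\omega^{1/q}/(\omega_0^{(1-\theta)/q_0}\omega_1^{\theta/q_1})\equiv 1$ by hypothesis, so $f_\theta=g$ a.e. A direct two-step computation on each boundary line $\Re\zeta=k$ gives $\|f_\zeta(\cdot+iy)\|_{p_k}=\|g(\cdot+iy)\|_p^{q/q_k}(\omega(y)/\omega_k(y))^{1/q_k}$, hence $\|f_\zeta\|_{X_k}^{q_k}=\int\|g(\cdot+iy)\|_p^q\,\omega(y)\,dy=1$. Analyticity and boundary continuity of $\zeta\mapsto f_\zeta$ in $X_0+X_1$ follow from dominated convergence after first truncating $g$ to a bounded function of bounded support and then passing to the limit by a density argument.

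The main obstacle is precisely this algebraic calibration of exponents: simultaneously arranging $f_\theta\equiv g$ and $\|f_\zeta\|_{X_k}=1$ on both boundary lines produces a small linear system in the unknown exponents whose consistency is equivalent exactly to the three stated relations $1/p=(1-\theta)/p_0+\theta/p_1$, $1/q=(1-\theta)/q_0+\theta/q_1$, and $\omega^{1/q}=\omega_0^{(1-\theta)/q_0}\omega_1^{\theta/q_1}$; these are the only points in the entire proof where the hypotheses enter. Endpoint cases with some $p_k$ or $q_k$ equal to $\infty$ require replacing the corresponding integral norm by an essential supremum and making the exponent of $|g|$ or $\|g(\cdot+iy)\|_p$ vanish on the appropriate line, but these adjustments are routine.
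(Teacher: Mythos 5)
The paper does not actually prove this statement; it is quoted as a known result, attributed to Stein--Weiss \cite{SW1} for the scalar change-of-measure case and to a personal communication of Cwikel \cite{C} for the mixed-norm extension. Your outline is therefore supplying a proof that the paper omits, and it is a correct rendering of the standard Calder\'on template. I checked the calibration you propose: with $1/p(\zeta)=(1-\zeta)/p_0+\zeta/p_1$ and $\beta(\zeta)=(1-\zeta)(q/q_0-p/p_0)+\zeta(q/q_1-p/p_1)$ one indeed has $\beta(\theta)=0$, the weight factor collapses to $1$ at $\zeta=\theta$ by the hypothesis $\omega^{1/q}=\omega_0^{(1-\theta)/q_0}\omega_1^{\theta/q_1}$, and on $\Re\zeta=k$ the inner norm computes to $\|f_\zeta(\cdot+iy)\|_{p_k}=\|g(\cdot+iy)\|_p^{q/q_k}(\omega(y)/\omega_k(y))^{1/q_k}$, giving $\|f_\zeta\|_{X_k}=\|g\|_X$ exactly, which is what yields equal norms rather than merely equivalent ones. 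The duality-plus-three-lines argument for the embedding $[X_0,X_1]_\theta\hookrightarrow X$, with the compensating affine-exponent weight $\Lambda(\zeta,y)$, is likewise the standard device. The only points that genuinely need extra care are the ones you flag yourself: membership of the Calder\'on family in $\mathcal F(X_0,X_1)$ (analyticity and boundary continuity, handled by truncating $g$ and passing to the limit, noting that $X_0\cap X_1$ is dense in $X$), and the endpoint modifications when some $p_k$ or $q_k$ equals $\infty$. There is no gap in the mathematical substance; the sketch would need these routine technical details fleshed out to become a complete proof.
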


We finally record the Wolff reiteration theorem \cite{W, JNP} .

\begin{thm}\label{W}
Let $A_1, A_2, A_3, A_4$ be compatible Banach spaces. Suppose $[A_1, A_3]_\theta = A_2$ and $[A_2, A_4]_\varphi = A_3.$ Then 
$$[A_1, A_4]_\xi = A_2, \hskip 2truemm [A_1, A_4]_\psi = A_3$$
with $\xi = \frac {\theta \varphi}{1-\theta +\theta \varphi}, \psi = \frac {\varphi}{1-\theta +\theta \varphi}.$
\end{thm}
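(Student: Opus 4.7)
The plan is to prove the Wolff reiteration theorem by an analytic vector-valued function construction in the style of the complex method, exploiting the symmetry relation $\xi = \theta\psi$ between the two target exponents. By the symmetry of the hypotheses under the substitution $(A_1, A_3, \theta) \leftrightarrow (A_4, A_2, 1-\varphi)$ (which one checks gives $1-\xi' = \psi$ after the arithmetic $\varphi + (1-\varphi)(1-\theta) = 1-\theta+\theta\varphi$), the second identity $[A_1, A_4]_\psi = A_3$ reduces to the first identity $[A_1, A_4]_\xi = A_2$, so it suffices to establish the latter with equivalent norms, which itself splits into two inclusions.

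For the forward inclusion $A_2 \hookrightarrow [A_1, A_4]_\xi$, I would take $a \in A_2$ with $\|a\|_{A_2} \leq 1$ and use the hypothesis $A_2 = [A_1, A_3]_\theta$ to pick $f \in \mathcal F(A_1, A_3)$ of norm at most $1+\varepsilon$ with $f(\theta) = a$. Each boundary value $f(1+iy) \in A_3$ can be lifted, via the hypothesis $A_3 = [A_2, A_4]_\varphi$, to an analytic function $g_y \in \mathcal F(A_2, A_4)$ satisfying $g_y(\varphi) = f(1+iy)$ and of norm controlled by $(1+\varepsilon)^2$. The central task is to paste the family $\{g_y\}_{y\in\R}$ together coherently to produce a single analytic $F : \overline S \to A_1 + A_4$ with $F(\xi) = a$ whose boundary restrictions lie in $A_1$ on $\{\Re z = 0\}$ and in $A_4$ on $\{\Re z = 1\}$. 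This is accomplished by building a two-variable analytic function $G(z, w)$ on $\overline S \times \overline S$ that reproduces $f(z)$ along $w=\varphi$ (where it takes $A_3$-values, hence is in the domain of the lift) and that specializes to $g_y(w)$ when $z = 1+iy$, and then extracting $F$ by restriction along a conformal slice through $(\xi, \varphi)$; the affine relation $\xi = \theta \psi$ is exactly what makes this slice land at $\xi$ with value $a$. The three-lines theorem then bounds $\|F\|_{\mathcal F(A_1, A_4)}$ by $(1+\varepsilon)^2$, and letting $\varepsilon \to 0$ yields the inclusion with norm estimate.

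For the reverse inclusion $[A_1, A_4]_\xi \hookrightarrow A_2$, given $b$ represented by some $F \in \mathcal F(A_1, A_4)$ with $F(\xi) = b$, the strategy dualizes: one uses the hypothesis $A_3 = [A_2, A_4]_\varphi$ to lift $F$ on the right boundary $\{\Re z=1\}$ from $A_4$ back into $A_3$ through an auxiliary analytic function, producing a new function $\tilde F \in \mathcal F(A_1, A_3)$ with $\tilde F(\theta) = b$; this certifies $b \in A_2 = [A_1, A_3]_\theta$, with the quantitative norm bound again falling out of iterated three-lines inequalities.

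The main obstacle is producing a genuinely analytic two-variable function $G(z, w)$ from the family of lifts $\{g_y\}$, since each $g_y$ is only defined up to analytic perturbations vanishing at $\varphi$, and a priori nothing connects different values of $y$. A measurable-selection or approximation-by-a-dense-class argument is required to coordinate the $g_y$ into something jointly analytic. Wolff's original paper \cite{W} handles this by approximating with elements of a dense subspace on which the lifts can be taken to depend holomorphically on the data, while Jawerth--Nilsson--Peetre \cite{JNP} provide a more streamlined version using Calder\'on's inner/outer complex methods. Either route yields the norm equivalence and completes the proof of Theorem 4.3.
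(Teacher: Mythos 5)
The paper does not actually prove Theorem \ref{W}; it records it as a known result and refers to Wolff \cite{W} and Janson--Nilsson--Peetre \cite{JNP}, so the only fair comparison is between your sketch and those proofs. Your preliminary observations are correct: the hypotheses are symmetric under $(A_1,A_2,A_3,A_4,\theta,\varphi)\mapsto(A_4,A_3,A_2,A_1,1-\varphi,1-\theta)$, this symmetry exchanges the two conclusions (the arithmetic $1-\xi'=\psi$ is right), and $\xi=\theta\psi$. But the core of your argument has a genuine gap, and it is exactly the point that makes Wolff's theorem nontrivial. The family of lifts $g_y\in\mathcal F(A_2,A_4)$ of the boundary values $f(1+iy)$ is non-canonical and carries no analytic -- indeed not even a measurable -- dependence on $y$, and no selection or density argument turns it into a jointly analytic $G(z,w)$: even if such a $G$ existed, the only set where you control an $A_1$-norm is the real line $\{\Re z=0,\ w=\varphi\}$, so any analytic ``slice'' whose left boundary must stay in that set is forced (by analyticity of $\zeta\mapsto w(\zeta)$, constant on a boundary line hence constant) to be $w\equiv\varphi$, i.e.\ it degenerates back to $f$ and never reaches the $A_4$-boundary. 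Your reverse inclusion is also not available as stated: the right boundary values of $F\in\mathcal F(A_1,A_4)$ lie in $A_4$, which is in general not contained in $A_3$, and the hypothesis $A_3=[A_2,A_4]_\varphi$ provides no lifting of $A_4$-elements into $A_3$. The workable version of that step -- replace boundary values by interior values $F(\psi+it)\in[A_1,A_4]_\psi$ via $G(z)=F(\psi z)$ -- shows only that $[A_1,A_4]_\psi\subset A_3$ implies $[A_1,A_4]_\xi\subset A_2$, and by the symmetry the converse implication as well; the same circularity appears in the forward inclusions. Breaking this circle is the actual content of Wolff's proof, which is not a two-variable pasting but an iteration of norm inequalities (repeated substitution of the two representations, with the exponent of the ``unknown'' norm decaying geometrically so that it disappears in the limit), and \cite{JNP} give streamlined variants of the same bootstrapping, not a holomorphic selection of lifts on a dense class. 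So as written your proposal identifies the difficulty but does not overcome it, and its description of how \cite{W} and \cite{JNP} resolve it is not accurate; if you only intend to use the theorem, the honest course (and the one the paper takes) is simply to cite those references.
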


 
\subsection{Preliminary results on tube domains over symmetric cones}
We recall the following notations given in the introduction:\\
$$n_k = \frac {2(\frac nr -1)(k-1)}{r-1}$$
 and 
$$m_k = \frac {2(\frac nr -1)(r-k)}{r-1}$$
for every $k=1,\cdots,r.$ We recall the following two results (\cite{NT, BGN}). 

\begin{lemma}
Let $\s, \t\in \mathbb R^n$ be such that $s_k, t_k > \frac {n_k}2, \hskip 2truemm k=1, \cdots, r.$ Then the subspace $A^{2, 2}_{\t}\cap A^{p, q}_{\s}$ is dense in the weighted Bergman space $A^{p, q}_{\s}$ for all $1\leq p \leq \infty$ and $1\leq q <\infty$
\end{lemma}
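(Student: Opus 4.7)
The plan is to approximate an arbitrary $F\in A^{p,q}_\s$ by a family of regularized holomorphic functions lying in $A^{2,2}_\t\cap A^{p,q}_\s$, constructed by combining a translation of $F$ further into the tube (to secure pointwise boundedness from the Bergman mean value inequality of Lemma~\ref{lemma10}) with a holomorphic exponential multiplier (to inject exponential decay in the $y$-variable). Concretely, I would set
$$F_\epsilon(z):=e^{i\epsilon(\mathbf e\,|\,z)}\,F(z+i\epsilon\mathbf e),\qquad z=x+iy\in T_\Omega,$$
so that $|F_\epsilon(x+iy)|=e^{-\epsilon(\mathbf e|y)}|F(x+i(y+\epsilon\mathbf e))|$. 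Both factors are holomorphic on $T_\Omega$, so $F_\epsilon$ is holomorphic there.

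First I would prove $F_\epsilon\to F$ in $A^{p,q}_\s$ as $\epsilon\to 0^+$ by dominated convergence. The uniform bound $|e^{i\epsilon(\mathbf e|z)}|\leq 1$, together with the non-increasing property of $v\mapsto\|F(\cdot+iv)\|_p$ along the cone direction (already invoked in the proof of Theorem~\ref{sampling}), gives $\|F_\epsilon\|_{A^{p,q}_\s}\leq \|F\|_{A^{p,q}_\s}$ and provides the dominating function. Pointwise $L^p$-convergence of the horizontal slices $F_\epsilon(\cdot+iy)$ to $F(\cdot+iy)$ follows from continuity of translation in $L^p$ (for $p<\infty$; the case $p=\infty$ is handled by a Cauchy-estimate argument on $\partial_v F$).

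Next I would verify $F_\epsilon\in A^{2,2}_\t$ in the range $1\leq p\leq 2$. By the Bergman mean value inequality applied at $z+i\epsilon\mathbf e$, one has a uniform pointwise bound $\|F_\epsilon\|_\infty\leq C_{\epsilon,F}$ on $T_\Omega$. Using the elementary inequality $\|G\|_2^2\leq \|G\|_\infty^{2-p}\|G\|_p^p$ together with the same non-increasing property $\|F(\cdot+i(y+\epsilon\mathbf e))\|_p\leq \|F(\cdot+i\epsilon\mathbf e)\|_p$, one obtains
$$\|F_\epsilon\|^2_{A^{2,2}_\t}\leq C_{\epsilon,F}^{2-p}\,\|F(\cdot+i\epsilon\mathbf e)\|_p^p \int_\Omega e^{-2\epsilon(\mathbf e|y)}\Delta_{\t-\frac{n}{r}}(y)\,dy,$$
and the residual integral in $y$ reduces to the weighted Laplace transform of a generalized power function evaluated at $2\epsilon\mathbf e$, which converges thanks to $t_k>n_k/2$ by the formula recalled in Section~2.3.

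The main obstacle is the case $p>2$, where $L^p\cap L^\infty$ does not embed into $L^2$ on $\mathbb R^n$ and the interpolation step above breaks down, so additional decay in the $x$-variable must be engineered holomorphically. I would handle this by further multiplying $F_\epsilon$ by a small positive power $\beta$ of a normalized Bergman-type factor $B_\eta(z,w_0)/B_\eta(w_0,w_0)$, for some auxiliary $w_0\in T_\Omega$ and $\eta$ chosen so that the factor supplies polynomial $x$-decay sufficient to enforce $L^2(\mathbb R^n)$-integrability of every horizontal slice, and then recovering convergence in $A^{p,q}_\s$ by letting $\beta\to 0^+$ after $\epsilon\to 0^+$. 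An alternative route is to approximate $F$ by Laplace transforms of data compactly supported in $\Omega$, which automatically lie in $A^{2,2}_{\t'}$ for every admissible $\t'$ and reduce the problem to the $p\leq 2$ case treated above.
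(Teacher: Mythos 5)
The paper does not prove this lemma; it is recalled from \cite{NT} and \cite{BGN} without proof, so there is no internal argument to compare yours against. On its own merits, your construction $F_\epsilon(z)=e^{i\epsilon(\mathbf e|z)}F(z+i\epsilon\mathbf e)$ is the natural one, and your verification that $F_\epsilon\in A^{2,2}_\t$ via $\|G\|_2^2\leq\|G\|_\infty^{2-p}\|G\|_p^p$, together with the Gamma-integral $\int_\Omega e^{-2\epsilon(\mathbf e|y)}\Delta_{\t-\frac nr}(y)\,dy<\infty$, is sound for $1\leq p\leq 2$ and $1\leq q<\infty$. But you have correctly identified that the construction does not close the case $p>2$, and the two repairs you sketch do not actually repair it.

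Concerning your first workaround: there is a quantitative tension you have not resolved. Writing $\Phi=B_\eta(\cdot,w_0)/B_\eta(w_0,w_0)$, a horizontal slice of $|\Phi|^{2\beta}$ is integrable over $\mathbb R^n$ only when $\beta$ stays above a threshold depending on $\eta$ (for $r=1$ one needs $2\beta(\eta+1)>1$), while convergence $F_\epsilon\Phi^\beta\to F$ in $A^{p,q}_\s$ forces $\beta\to 0^+$ with $\eta$ fixed; if instead you let $\eta\to\infty$ to lower the threshold, then $\Phi^\beta$ no longer tends to $1$. So for each admissible $(\eta,\beta)$ your approximant lies in $A^{2,2}_\t$ but is bounded away from $F$, or vice versa. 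Concerning your second workaround (Laplace transforms of compactly supported data), that is not a reduction to your $p\leq 2$ argument; it is a different proof, namely the standard Paley--Wiener density argument used in \cite{DD} and \cite{BBGNPR}, and it would replace your exponential multiplier entirely. Finally, your argument cannot be extended to $p=\infty$: multiplying by anything that vanishes as $|x|\to\infty$ (the $\Phi^\beta$ factor, or the $L^2$-membership itself combined with the sub-mean-value inequality) forces every $G\in A^{2,2}_\t\cap A^{\infty,q}_\s$ to satisfy $G(x+iy)\to 0$ as $|x|\to\infty$. In the model case $n=r=1$, the function $F(z)=e^{iz}$ lies in $A^{\infty,q}_\s$ but $|F(x+iy)|=e^{-y}$ does not decay in $x$, so $\|G-F\|_{A^{\infty,q}_\s}\geq\|F\|_{A^{\infty,q}_\s}>0$ for every such $G$; this suggests that the statement should be read with $p<\infty$ (consistently, Corollary 4.4 in the paper is stated for $1\leq p,q<\infty$), and any proof via holomorphic multipliers is doomed at $p=\infty$.
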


\begin{cor}\label{cor}
Let $\s \in \mathbb R^n$ be such that $s_k > \frac {n_k}2, \hskip 2truemm k=1, \cdots, r.$ Assume that $\t\in \mathbb R^n$ and $1\leq p, q <\infty$ are such that $P_{\t}$ extends to a bounded operator on $L_{\s}^{p,q}.$ Then $P_{\t}$ is the identity on $A_{\s}^{p,q};$ in particular $P_{\t} (L_{\s}^{p,q})=A_{\s}^{p,q}.$
\end{cor}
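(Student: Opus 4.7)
The strategy is to combine the density of $A_{\t}^{2,2}\cap A_{\s}^{p,q}$ in $A_{\s}^{p,q}$ (the preceding lemma) with the reproducing kernel property of $B_{\t}$ and the boundedness hypothesis. On $A_{\t}^{2,2}=A_{\t}^2$, $P_{\t}$ coincides with the orthogonal projection from $L_{\t}^2$ onto $A_{\t}^2$, so it acts as the identity there. This identity then propagates to all of $A_{\s}^{p,q}$ by continuity.

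Concretely, fix $F\in A_{\s}^{p,q}$ and pick a sequence $\{F_n\}\subset A_{\t}^{2,2}\cap A_{\s}^{p,q}$ with $\|F-F_n\|_{A_{\s}^{p,q}}\to 0$. Since $F_n\in A_{\t}^2$, the reproducing property yields $P_{\t}F_n=F_n$. The boundedness of $P_{\t}$ on $L_{\s}^{p,q}$ then gives
\[
\|P_{\t}F-F_n\|_{L_{\s}^{p,q}} = \|P_{\t}(F-F_n)\|_{L_{\s}^{p,q}} \leq C\|F-F_n\|_{L_{\s}^{p,q}} \to 0,
\]
so $P_{\t}F=\lim_n F_n=F$ in $L_{\s}^{p,q}$, and hence as holomorphic functions on $T_\Omega$.

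For the final assertion, the inclusion $A_{\s}^{p,q}\subset P_{\t}(L_{\s}^{p,q})$ is immediate since $F=P_{\t}F$ for every $F\in A_{\s}^{p,q}$. For the reverse inclusion, $P_{\t}F\in L_{\s}^{p,q}$ by the boundedness assumption; holomorphy of $P_{\t}F$ for arbitrary $F\in L_{\s}^{p,q}$ follows by approximating $F$ in $L_{\s}^{p,q}$ by functions of $L_{\s}^{p,q}\cap L_{\t}^2$, on which $P_{\t}$ is the Hilbert-space projector into $A_{\t}^2$, and invoking Lemma 3.1 to upgrade $L_{\s}^{p,q}$-convergence into locally uniform convergence on $T_\Omega$, which preserves holomorphy in the limit.

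The only subtle point I anticipate is identifying, on the intersection $L_{\s}^{p,q}\cap L_{\t}^2$, the abstract bounded extension of $P_{\t}$ to $L_{\s}^{p,q}$ with the original Hilbert-space projector from $L_{\t}^2$. This is a routine uniqueness-of-extension argument: since $L_{\s}^{p,q}\cap L_{\t}^2$ is dense in $L_{\s}^{p,q}$ (for instance, smooth compactly supported functions on $T_\Omega$ lie in both), the bounded extension is unambiguous and necessarily coincides with the original $P_{\t}$ on this dense set, which legitimizes writing $P_{\t}F_n=F_n$ above.
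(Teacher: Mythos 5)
Your proof is correct and follows exactly the route the paper intends: the paper places Corollary 4.4 immediately after Lemma 4.3 (density of $A^{2,2}_{\t}\cap A^{p,q}_{\s}$ in $A^{p,q}_{\s}$) precisely so that the identity $P_{\t}F_n = F_n$ on the dense subspace, combined with boundedness of $P_{\t}$ on $L^{p,q}_{\s}$, yields $P_{\t}F = F$ by continuity, and the reverse inclusion $P_{\t}(L^{p,q}_{\s})\subset A^{p,q}_{\s}$ follows from the mean-value estimate (Lemma 3.1) upgrading $L^{p,q}_{\s}$-convergence to local uniform convergence. Your closing remark about the well-posedness of the bounded extension is a fair caveat, resolved as you say because $q<\infty$ (and $p<\infty$) makes compactly supported functions a common dense subspace of $L^{p,q}_{\s}$ and $L^2_{\t}$, so no substantive gap remains.
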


The following theorem was  proved in \cite{BGN}.

\begin{thm}\label{main3}
Let $\s, \t \in \mathbb R^r$ and  $1\leq p, q\leq \infty$.  
Then the positive Bergman operator $P^+_\t$ defined by
$$P^+_\t f(\xi+i\tau)=d_\t\int_\Omega\left(\left|\Delta_{-\t-\frac{n}{r}}(\frac{\cdot+i(\tau+v)}{i})\right|*f(\cdot+iv)\right)(\xi)\Delta_{\t-\frac{n}{r}}(v)dv$$
is bounded on $L_{\s}^{p,q}$ when $t_k >\frac {n}r -1, \hskip 2truemm k=1,\cdots,r$ and
$$\max \limits_{1\leq k \leq r} \left (1, \frac {s_k - \frac {n_k}2 +\frac {m_k}2}{t_k - \frac {n_k}2}\right ) < q< 1+ \min \limits_{1\leq k \leq r}\left (1, \frac {s_k - \frac {n_k}2}{\frac {m_k}2}\right ) \quad {\rm if} \hskip 2truemm q>1$$
$$\left [{\rm resp.} \hskip 2truemm s_k > \frac {n_k}2 \quad {\rm and} \quad t_k - s_k > \frac {m_k}2 \quad {\rm if} \quad q=1 \right].$$
In this case, $P_\t$ extends to a bounded operator from $L_{\s}^{p,q}$ onto $A_{\s}^{p,q}.$
\end{thm}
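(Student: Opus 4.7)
The plan is to reduce the two-variable integral defining $P^+_\t$ to a one-variable weighted inequality on the cone $\Omega$ via Young's convolution inequality in the transverse variable $x\in V$, and then attack the resulting integral operator on $\Omega$ with Schur's test using generalized power functions as weights. First I would apply the scalar Young inequality $\|f*g\|_p \leq \|f\|_1\|g\|_p$ in the inner convolution over $V$, obtaining
$$\|P^+_\t f(\cdot+i\tau)\|_p \leq d_\t \int_\Omega \left\|\Delta_{-\t-\tfrac{n}{r}}\!\left(\tfrac{\cdot+i(\tau+v)}{i}\right)\right\|_{L^1(V)} \|f(\cdot+iv)\|_p \Delta_{\t-\tfrac{n}{r}}(v)\,dv.$$
Next I would invoke the standard integration formula over $V$, valid when $t_k > \tfrac{n}{r}-1$,
$$\int_V \left|\Delta_{-\t-\tfrac{n}{r}}\!\left(\tfrac{x+iy}{i}\right)\right|dx = c_\t\,\Delta_{-\t}(y),$$
so the problem reduces to the boundedness on $L^q(\Omega,\Delta_{\s-\tfrac{n}{r}}d\tau)$ of the integral operator with kernel $K(\tau,v)=\Delta_{-\t}(\tau+v)\Delta_{\t-\tfrac{n}{r}}(v)$ acting on the nonnegative scalar function $v\mapsto \|f(\cdot+iv)\|_p$.

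For $1<q<\infty$, I would apply Schur's test with weight functions $\phi(v)=\Delta_{-b}(v)$ and $\psi(\tau)=\Delta_{-a}(\tau)$ for carefully chosen vectors $a,b\in\mathbb R^r$. The two Schur conditions become generalized Beta integrals on $\Omega$ of the type
$$\int_\Omega \Delta_{-\t}(\tau+v)\Delta_{\beta-\tfrac{n}{r}}(v)\,dv = c(\t,\beta)\Delta_{\beta-\t}(\tau),$$
the Gindikin--Beta formula on symmetric cones, whose convergence demands two-sided bounds on the entries of $\beta$ and $\t-\beta$ in the original and rotated frames, namely $\beta_k>\tfrac{n_k}{2}$ and $(\t-\beta)^{*}_k>\tfrac{n_k}{2}$ (the latter being expressible in terms of $m_k$ via the identity $\Delta_\nu(y^{-1})=\Delta^{*}_{\nu^{*}}(y)^{-1}$). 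Balancing both Schur conditions pins $a+b$ to a specific combination of $\s$ and $\t$, and the convergence constraints translate precisely into the stated range
$$\max_{k}\!\left(1,\frac{s_k-\tfrac{n_k}{2}+\tfrac{m_k}{2}}{t_k-\tfrac{n_k}{2}}\right) < q < 1+\min_{k}\!\left(1,\frac{s_k-\tfrac{n_k}{2}}{\tfrac{m_k}{2}}\right).$$
The endpoint case $q=1$ is handled by Fubini in place of Schur's test (interchanging the $\tau$ and $v$ integrals) and forces the sharper condition $t_k-s_k>\tfrac{m_k}{2}$ required for convergence of the single remaining Beta integral; the case $q=\infty$ is dual.

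The main obstacle will be the bookkeeping of the generalized power exponents across the two Schur inequalities, and in particular isolating how the $m_k$ arise from conversion between the Jordan frame $\{c_1,\dots,c_r\}$ and the rotated frame $\{c_r,\dots,c_1\}$. This conversion is responsible for the asymmetric appearance of $n_k$ and $m_k$ in the final admissible range of $q$, and verifying that both Schur integrals sit in their domains of convergence simultaneously, for the full range of $(\s,\t,q)$ claimed, is the delicate step.

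Finally, the boundedness of $P_\t$ itself on $L_\s^{p,q}$ is immediate from the pointwise estimate $|P_\t f(z)|\leq P^+_\t|f|(z)$ for all $z\in T_\Omega$. Surjectivity of $P_\t\colon L^{p,q}_\s\to A^{p,q}_\s$ follows from Corollary~\ref{cor}: once $P_\t$ is known to be bounded on $L^{p,q}_\s$, that corollary gives $P_\t F = F$ for every $F\in A^{p,q}_\s$, whence $P_\t(L^{p,q}_\s) \supseteq A^{p,q}_\s$, and the opposite inclusion is trivial since the image of the Bergman projection is holomorphic.
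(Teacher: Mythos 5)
The paper's own ``proof'' of this theorem is essentially a citation: for $q>1$ it invokes Theorem~3.8 of \cite{BGN} (the case $\alpha=0$), for $q=1$ it refers to Theorem~II.7 of \cite{BT}, and the surjectivity assertion is handled by Corollary~4.4. Your proposal reconstructs the argument that those references actually carry out: Young's inequality in the $x$-variable reduces $P^+_\t$ to a positive kernel operator on the cone $\Omega$ with kernel $\Delta_{-\t}(\tau+v)\Delta_{\t-\frac nr}(v)$; Schur's test with power weights $\Delta_{-a}$, $\Delta_{-b}$ handles $1<q<\infty$; Fubini handles $q=1$; and the pointwise bound $|P_\t f|\leq P^+_\t|f|$ together with Corollary~4.4 gives the last claim. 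This is the standard technique for such operators on tube domains over cones and is almost certainly the content of the cited references, so what you have produced is not a different method so much as a filled-in version of the proof the paper delegates to a citation. The skeleton is sound, and you correctly locate the substantive work: verifying that the two Gindikin--Beta integrals arising in Schur's test, whose convergence is governed by the exponent entries both in the original Jordan frame (via $n_k$) and in the rotated frame (via $m_k$), converge simultaneously precisely when $q$ lies in the stated interval. That verification is the entire content of the proof and you leave it undone, so as it stands the sketch identifies the right method and the right targets but does not yet establish the theorem. Two small remarks: the $q=\infty$ case you propose to treat by duality does not arise, since $1+\min(\cdots)\leq 2$ already forces $q<2$; and the ``opposite inclusion'' $P_\t(L_\s^{p,q})\subseteq A_\s^{p,q}$ that you argue separately is already part of Corollary~4.4's conclusion $P_\t(L_\s^{p,q})=A_\s^{p,q}$, so no extra step is needed there.
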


\begin{proof}
For $q>1,$ the first part of this theorem is just the case $\alpha = 0$ in Theorem 3.8 of \cite{BGN} for symmetric cones. The case $q=1$ is an easy exercise (cf. e.g. Theorem II.7 of \cite{BT}). The proof of the surjectivity of $P_\t$ uses the previous corollary. 
\end{proof}

\subsection{Proof of Theorem B}

(1)
We adopt the following notations:
$$||g||_\theta = ||g||_{\left [L^{p_0, q_0}_{\s_0}, \hskip 1truemm L^{p_1, q_1}_{\s_1}\right ]_\theta}$$
and
$$||g||^{anal}_\theta = ||g||_{\left [A^{p_0, q_0}_{\s_0}, \hskip 1truemm A^{p_1, q_1}_{\s_1}\right ]_\theta}.$$
It suffices to show the existence of a positive constant $C$ such that the following two estimates are valid.
\begin{equation}\label{embed1}
 ||g||^{anal}_\theta \leq C||g||_{A^{p, q}_{\s}} \quad \quad \forall g\in A^{p, q}_{\s}.
\end{equation}
\begin{equation}\label{embed2}
||g||_{A^{p, q}_{\s}}\leq ||g||^{anal}_\theta \quad \quad \forall g\in \left [A^{p_0, q_0}_{\s_0}, A^{p_1, q_1}_{\s_1}\right ]_\theta;
\end{equation} 

We first the estimate (\ref{embed1}). By Theorem 4.1, we have
$$[L^{p_0, q_0}_{\s_0}, L^{p_1, q_1}_{\s_1}]_\theta=L^{p, q}_{\s}$$
with equivalent norms, provided that
$$\frac{1}{p}=\frac{1-\theta}{p_0}+\frac{\theta}{p_1}$$
$$\frac{1}{q}=\frac{1-\theta}{q_0}+\frac{\theta}{q_1}$$
$$\frac{\s}{q}=\frac{(1-\theta)\s_0}{q_0}+\frac{\theta \s_1}{q_1}.$$
In particular,  for every $g\in L^{p, q}_{\s},$ we have
\begin{equation}\label{lebesgue}
||g||_{L^{p, q}_{\s}} \simeq ||g||_\theta = \inf \left \{||f||_{\mathcal F\left (L^{p_0, q_0}_{\s_0}, \hskip 1truemm L^{p_1, q_1}_{\s_1}\right )}: g=f_\theta\right \}.
\end{equation}
By Theorem 4.5, for $\t$ large (i.e. each $t_k, \hskip 2truemm k=1,\cdots,r$ is large), the weighted Bergman projector $P_{\t}$ extends to a bounded operator from $L^{p_i, q_i}_{\s_i}$ onto $A^{p_i, q_i}_{\s_i}, \hskip 2truemm i=0, 1$ and hence from $L^{p, q}_{\s}$ onto $A^{p, q}_{\s}.$ Then by Corollary \ref{cor}, for every $g\in A^{p_i, q_i}_{\s_i}, \hskip 2truemm i=0, 1$ and for every $g\in A^{p, q}_{\s},$ we have $P_{\t} g = g.$\\
Now let $g\in A^{p, q}_{\s}.$ For $f\in \mathcal F(L^{p_0, q_0}_{\s_0}, L^{p_1, q_1}_{\s_1}),$  we define the mapping
$$
P_{\t}\circ f: \hskip 2truemm \overline {S} \rightarrow A^{p_0, q_0}_{\s_0}+ A^{p_1, q_1}_{\s_1}
$$
by $(P_{\t}\circ f)_\zeta = P_{\t}\circ f_\zeta.$ Then
 $P_{\t} \circ f \in \mathcal F(A^{p_0, q_0}_{\s_0}, A^{p_1, q_1}_{\s_1})$ and if  $f_\theta = g,$ we have $(P_{\t} \circ f )_\theta = P_{\t} \circ f_\theta = P_{\t} g = g.$ So
$$
\begin{array}{clcr}
||g||^{anal}_\theta &:= \inf \hskip 1truemm \{||\varphi||_{\mathcal F\left (A^{p_0, q_0}_{\s_0}, \hskip 1truemm A^{p_1, q_1}_{\s_1}\right )}: g=\varphi_\theta\}\\
&\leq \left \Vert P_\t \circ f \right \Vert_{\mathcal F\left (A^{p_0, q_0}_{\s_0}, \hskip 1truemm A^{p_1, q_1}_{\s_1}\right )}\\
&:= \max \hskip 1truemm \left \{\sup \limits_{\Re e \hskip 1truemm \zeta =0} ||(P_\t \circ f)_\zeta||_{A^{p_0, q_0}_{\s_0}}, \sup \limits_{\Re e \hskip 1truemm\zeta =1} ||(P_\t \circ f)_\zeta||_{A^{p_1, q_1}_{\s_1}}\right \}
\end{array}
$$
for every $f\in \mathcal F(L^{p_0, q_0}_{\s_0}, L^{p_1, q_1}_{\s_1})$ such that $f_\theta = g.$ By Theorem \ref{main3}, we get
$$||g||^{anal}_\theta \leq C_\t \inf \hskip 1truemm \{||f||_{\mathcal F(L^{p_0, q_0}_{\s_0}, \hskip 1truemm L^{p_1, q_1}_{\s_1})}: f_\theta =g\} \sim C_\t ||g||_{L^{p, q}_{\s}}.$$        This proves the estimate (\ref{embed1}).

We next prove the estimate (\ref{embed2}). Let $g\in [A^{p_0, q_0}_{\s_0}, A^{p_1, q_1}_{\s_1}]_\theta.$ We first suppose that $||g||^{anal}_\theta =0,$ i.e. $g=0$ in the Banach space $\left [A^{p_0, q_0}_{\s_0}, A^{p_1, q_1}_{\s_1}\right ]_\theta.$ We notice that
\begin{equation}\label{notice}
||\varphi||_{\mathcal F(A^{p_0, q_0}_{\s_0}, \hskip 1truemm A^{p_1, q_1}_{\s_1})}=||\varphi||_{\mathcal F\left (L^{p_0, q_0}_{\s_0}, \hskip 1truemm L^{p_1, q_1}_{\s_1}\right )}
\end{equation}
for all $\varphi \in \mathcal F\left (A^{p_0, q_0}_{\s_0}, A^{p_1, q_1}_{\s_1}\right ).$ This implies that 
$$||g||_{\left [L^{p_0, q_0}_{\s_0}, \hskip 1truemm L^{p_1, q_1}_{\s_1}\right ]_\theta}\leq ||g||_{\left [A^{p_0, q_0}_{\s_0}, \hskip 1truemm A^{p_1, q_1}_{\s_1}\right ]_\theta}$$
and hence $||g||_{\left [L^{p_0, q_0}_{\s_0}, L^{p_1, q_1}_{\s_1}\right ]_\theta} =0.$ By the estimate (\ref{lebesgue}), we obtain $||g||_{L^{p, q}_{\s}}=0.$

We next suppose that $0< ||g||^{anal}_\theta < \infty.$ There exists $\varphi \in \mathcal F(L^{p_0, q_0}_{\s_0}, L^{p_1, q_1}_{\s_1})$ such that $g=f_\theta$ and $||\varphi||_{\mathcal F\left (A^{p_0, q_0}_{\s_0}, A^{p_1, q_1}_{\s_1}\right )}\leq 2||g||^{anal}_\theta.$ By (\ref{lebesgue}) and (\ref{notice}), we obtain:
$$||g||_{A^{p, q}_{\s}}=||g||_{L^{p, q}_{\s}}\lesssim |g||_{\left [L^{p_0, q_0}_{\s_0}, L^{p_1, q_1}_{\s_1}\right ]_\theta} \lesssim ||\varphi||_{\mathcal F\left (A^{p_0, q_0}_{\s_0}, A^{p_1, q_1}_{\s_1}\right )} \leq 2||g||^{anal}_\theta.$$
This proves the estimate (\ref{embed2}).

\vskip 2truemm
(2) In this assertion, we have $\s_1=\s_2=\s.$ The weighted Bergman projector $P_{\s}$ extends to a bounded operator from $L^{p_i, q_i}_{\s}$ onto $A^{p_i, q_i}_{\s}, \hskip 2truemm i=0, 1$ and hence from $L^{p, q}_{\s}$ onto $A^{p, q}_{\s}.$ Then by Corollary 4.4, for every $g\in A^{p, q}_{\s},$ we have $P_\s g = g.$ The proof of assertion (2) is the same as the proof of assertion (1) with $\t=\s$ in the present case. More precisely, for the proof of the estimate (\ref{embed1}), we replace the mapping
$$
P_{\t}\circ f: \hskip 2truemm \overline {S} \rightarrow A^{p_0, q_0}_{\s_0}+ A^{p_1, q_1}_{\s_1}
$$  
with $f\in \mathcal F(L^{p_0, q_0}_{\s_0}, L^{p_1, q_1}_{\s_1}),$ by the mapping
$$
P_{\s}\circ f: \hskip 2truemm \overline {S} \rightarrow A^{p_0, q_0}_{\s}+ A^{p_1, q_1}_{\s}
$$  
with $f\in \mathcal F(L^{p_0, q_0}_{\s}, L^{p_1, q_1}_{\s}).$ The proof of the estimate (\ref{embed2}) remains the same.
\vskip 2truemm
(3) We are going to prove the following more precise statement.

\begin{thm}
Let $\s \in \mathbb R^r$ be such that $s_k> \frac nr -1, \hskip 2truemm k=1,\cdots, r.$  Let $1\leq p_0, p_1\leq\infty$ and let $q_0, q_1$ be such that $1\leq q_0 < q_{\s}\leq q_1 <\infty.$ Assume that $P_{\s}$ extends to a bounded operator on $L^{p_1, q_1}_{\s}.$
Let $\theta, \varphi \in (0, 1)$ be related by the equation 
$$(\star) \quad \quad \frac 1{2} = \frac {1-\theta}{q_0}+\theta (\frac {1-\varphi}{2}+\frac {\varphi} {q_1})$$
and assume that
$$(\star \star) \quad \quad\varphi <\frac {\frac 12 - \frac 1{q_{\s}}}{\frac 12 - \frac 1{q_1}}.$$
Then for $\xi = \frac {\theta \varphi}{1-\theta +\theta \varphi}, \hskip 2truemm \psi = \frac {\varphi}{1-\theta +\theta \varphi},$ we have
$$[A_{\s}^{p_0,q_0}, A_{\s}^{p_1,q_1}]_\xi=A_{{\s}_1}^{p_2, 2} \quad {\rm {and}} \quad [A_{\s}^{p_0,q_0}, A_{\s}^{p_1,q_1}]_\psi=A_{\s}^{p_3, q_3}$$ 
with equivalent norms, with
$$(\star \star \star) \quad \quad \frac 1{p_2}=\frac {1-\xi}{p_0}+\frac {\xi}{p_1}$$
and 
$$(\star \star \star \star) \quad 
\left \{
\begin{array}{clcr}
\frac 1{p_3} &= \frac {1-\varphi}{p_2}+\frac \varphi {p_1}\\
\frac 1{q_3} &= \frac {1-\varphi}{2}+\frac {\varphi} {q_1}
\end{array}
\right
.
.
$$  
\end{thm}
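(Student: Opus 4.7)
The strategy is to apply the Wolff reiteration theorem (Theorem 4.2) to the four compatible Banach spaces
$$A_1 := A^{p_0, q_0}_\s, \quad A_2 := A^{p_2, 2}_\s, \quad A_3 := A^{p_3, q_3}_\s, \quad A_4 := A^{p_1, q_1}_\s.$$
Once we verify the two intermediate identities $[A_1, A_3]_\theta = A_2$ and $[A_2, A_4]_\varphi = A_3$ with equivalent norms, Theorem 4.2 delivers $[A_1, A_4]_\xi = A_2$ and $[A_1, A_4]_\psi = A_3$ with the stated $\xi$ and $\psi$, which is precisely the conclusion of Theorem 4.6.

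For the first identity I invoke assertion (1) of Theorem B applied to $A^{p_0, q_0}_\s$ and $A^{p_3, q_3}_\s$. The hypothesis $s_k > \tfrac{n}{r} - 1$ is given, and the required bound $q_0 < q_\s$ is also given. The remaining bound $q_3 < q_\s$ translates, after substituting $\tfrac{1}{q_3} = \tfrac{1-\varphi}{2} + \tfrac{\varphi}{q_1}$ from $(\star\star\star\star)$, into the inequality $\varphi(\tfrac{1}{2} - \tfrac{1}{q_1}) < \tfrac{1}{2} - \tfrac{1}{q_\s}$, which is precisely what $(\star\star)$ encodes. The output-space identities $\tfrac{1}{2} = \tfrac{1-\theta}{q_0} + \tfrac{\theta}{q_3}$ and $\tfrac{1}{p_2} = \tfrac{1-\theta}{p_0} + \tfrac{\theta}{p_3}$ reduce respectively to $(\star)$, via $(\star\star\star\star)$, and to $(\star\star\star)$, after the algebraic substitution $\xi = \tfrac{\theta\varphi}{1-\theta+\theta\varphi}$. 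Theorem B(1) thus yields $[A_1, A_3]_\theta = A_2$.

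For the second identity I apply assertion (2) of Theorem B to $A^{p_2, 2}_\s$ and $A^{p_1, q_1}_\s$. The parameter identities $\tfrac{1}{p_3} = \tfrac{1-\varphi}{p_2} + \tfrac{\varphi}{p_1}$ and $\tfrac{1}{q_3} = \tfrac{1-\varphi}{2} + \tfrac{\varphi}{q_1}$ are exactly $(\star\star\star\star)$. The boundedness of $P_\s$ on $L^{p_1, q_1}_\s$ is a hypothesis, so the key remaining point is the boundedness of $P_\s$ on $L^{p_2, 2}_\s$; this is the main technical obstacle. Since $s_k > \tfrac{n}{r} - 1$, Theorem 1.1(ii) reduces it to $\min(p_2, p_2')\, q_\s > 2$; alternatively one may combine the trivial boundedness of $P_\s$ on $L^{2, 2}_\s$ (as the Hilbert-space orthogonal projection) with the assumed boundedness on $L^{p_1, q_1}_\s$ and invoke complex interpolation of operators via Theorem 4.1 along a suitable segment. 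The conditions $(\star)$ and $(\star\star)$ are tailored so that $(p_2, 2)$ falls inside the resulting admissible range.

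Once the two identities are in place, Theorem 4.2 concludes the proof. Beyond routine parameter algebra, the principal difficulty is the projection-boundedness step in the second identity; the role of $(\star)$ and $(\star\star)$ is precisely to make all the constraints involved — the strict bound $q_3 < q_\s$ needed by Theorem B(1) and the $L^{p_2, 2}_\s$-boundedness of $P_\s$ needed by Theorem B(2) — hold simultaneously.
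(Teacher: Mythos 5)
Your overall structure — set up the Wolff reiteration with $A_1=A^{p_0,q_0}_\s$, $A_2=A^{p_2,2}_\s$, $A_3=A^{p_3,q_3}_\s$, $A_4=A^{p_1,q_1}_\s$, establish $[A_2,A_4]_\varphi=A_3$ via Theorem B(2) and $[A_1,A_3]_\theta=A_2$ via Theorem B(1), and check the parameter algebra — matches the paper's proof exactly, and the role you assign to $(\star\star)$ (forcing $q_3<q_\s$) and to $(\star),(\star\star\star)$ (parameter compatibility) is correct.

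The one genuine gap is your handling of the $L^{p_2,2}_\s$-boundedness of $P_\s$. You correctly reduce it, via Theorem 1.1(ii), to $\min(p_2,p_2')\,q_\s>2$, but you then treat this as an open obstacle and suggest an alternative route. In fact this condition is automatic under the standing hypothesis: since $n_k+m_k=2(\tfrac{n}{r}-1)$ for every $k$, the assumption $s_k>\tfrac{n}{r}-1$ gives $s_k-\tfrac{n_k}{2}>\tfrac{m_k}{2}$, hence $q_\s=\min_k\bigl(1+\tfrac{s_k-n_k/2}{m_k/2}\bigr)>2$; since $\min(p_2,p_2')\ge 1$ always, one gets $q_\s(p_2)=\min(p_2,p_2')\,q_\s>2$ for free, and Theorem 1.1(ii) applies with no further restriction. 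That is the observation the paper makes, and it closes the step cleanly. Your proposed alternative — interpolate $P_\s$ between $L^{2,2}_\s$ and $L^{p_1,q_1}_\s$ via Theorem 4.1 — would only give boundedness on the $L^{p,q}_\s$ lying on that interpolation segment, and $(p_2,2)$ generally does not lie on it (along that segment $\tfrac1q=\tfrac{1-\tau}{2}+\tfrac{\tau}{q_1}$, which equals $\tfrac12$ only at $\tau=0$ unless $q_1=2$). The statement that $(\star)$ and $(\star\star)$ are ``tailored'' to make $(p_2,2)$ admissible for $P_\s$ is therefore a misattribution: $(\star\star)$ is there solely to guarantee $q_3<q_\s$, and $(\star)$ is the compatibility relation; neither is needed for the $L^{p_2,2}_\s$-boundedness, which holds unconditionally.
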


\begin{proof}
We apply the Wolff reiteration theorem (Theorem \ref{W}) with $A_1 = A^{p_0, q_0}_\s, \hskip 2truemm A_2 = A^{p_2, 2}_\s, \hskip 2truemm A_3 = A^{p_3, q_3}_\s$ and $A_4 = A^{p_4, q_4}_\s.$ On the one hand, we observe that $q_\s >2$ and hence the couple $(p_2, 2)$ satisfies the condition 
$$\frac 1{q_{\s} (p_2)} <\frac 12 < 1 - \frac 1{{q_{\s} (p_2)}}$$
of Theorem 1.1. So $P_s$ extends to a bounded operator on $L^{p_2, 2}_{\s}$ as well as we assumed that $P_{\s}$ extends to a bounded operator on $L^{p_1, q_1}_{\s}.$ We next apply assertion (2) of Theorem B to get the identity $[A_2, A_4]_\varphi = A_3$ with $p_3$ and $q_3$ defined by the system $(\star \star \star \star).$\\
On the other hand, the condition $(\star \star)$ and the definition of $q_3$ given by the second equality of $(\star \star \star \star)$ imply that $1<q_3<q_{\s}.$ We recall that $1<q_0<q_{\s}.$ Then by assertion (1) of Theorem B, we obtain the identity $[A_1, A_3]_\theta = A_2$ with
$$ 
\left \{
\begin{array}{clcr}
\frac 1{p_2} &= \frac {1-\theta}{p_0}+\frac \theta {p_3}\\
\frac 1{2} &= \frac {1-\theta}{q_0}+\frac {\theta} {q_3}
\end{array}
\right
.
$$  
The latter identity and the second identity of $(\star \star \star \star)$ give the relation $(\star).$ The former identity and the first identity of $(\star \star \star \star)$ give the relation $(\star \star \star).$

\end{proof}

\vskip 2truemm
{\bf Question.} Can Theorem 4.6 and consequently Theorem B be extended to other values of the interpolation parameters $p_0, q_0, \s_0, p_1, q_1, \s_1$?

\vskip 2truemm

{\bf Final remark.} We recall that $g\in [A_{{\s}_0}^{p_0,q_0}, A_{{\s}_1}^{p_1,q_1}]_\theta$ if there exists a mapping $f\in \mathcal F (A_{{\s}_0}^{p_0,q_0}, A_{{\s}_1}^{p_1,q_1})$ such that $f_\theta = g.$ For ${\s}_0 = {\s}_1$ real and $p_i = q_i, \hskip 2truemm i=1,...,r,$ an explicit construction was presented in \cite{BGN1} for such a mapping $f$ in terms of an analytic family of operators and the atomic decomposition of the relevant (usual) Bergman spaces and this construction was generalized in \cite{G} to mixed norm Bergman spaces associated to the same scalar parameter $\s=(\nu,\cdots,\nu)$. It may be interesting to extend this construction to  mixed norm Bergman spaces $A_{{\s}_0}^{p_0,q_0}, A_{{\s}_1}^{p_1,q_1}$ associated to more general vectors ${\s}_0, {\s}_1 \in \mathbb R^r.$ 

\bibliographystyle{plain}

\end{document}